\def\C{\mathbb{C}}
\def\Q{\mathbb{Q}}
\def\N{\mathbb{N}}
\def\F{\mathbb{F}}
\def\P{\mathbb{P}}
\newcommand{\supp }{\mathrm{supp\,}}
\newtheorem{theo}{Theorem}[section]
\newtheorem{proposition}[theo]{Proposition}
\newtheorem{remark}[theo]{Remark}
\newtheorem{theorem}[theo]{Theorem}
\newtheorem{example}[theo]{Example}
\def\div{\mathsf{D}}
\def\pp{\mathsf{Princ}}
\def\FF{\mathbf{F}}
\def\bP{\mathsf{P}}
\begin{document}
	\title[On the coefficients of the L-polynomial]{On the coefficients of the Zeta-Function's L-polynomial for algebraic function fields over finite constant fields.}

	\author{M. Koutchoukali}
	\address{Aix Marseille Univ, CNRS, Centrale Marseille, I2M, Marseille, France. Institut de Math\'ematiques de Marseille, UMR 7373, CNRS, Aix-Marseille Universit\'e, case 930, F13288 Marseille cedex 9, France}
	\email{koutchoukali.mahdi@hotmail.fr}
	\email{mohamed-mahdi.koutchoukali@ac-aix-marseille.fr}
	\begin{abstract}
		We give an explicit formula of the coefficients of the Zeta-Function's L-polynomial for algebraic function fields over finite constant fields. 
		Thus, we deduce an expression of the class number of algebraic function fields defined over finite fields. Moreover, we give an application 
		of this formula in the case of the curves of defect 2 defined over $\F_2$.
		
	\end{abstract}
	
	\date{\today}
	\keywords{Function fields, finite fields, rational points, Zeta-Function, $L$-polynomial, infinite recurrence, triangular matrix, parapermanent}
	
	\maketitle
	
	\section{Introduction}
	
	\subsection{Context and motivation}
	
	It is well known that the arithmetic and geometry of algebraic function fields is partly reflected in the properties of their Zeta-functions (cf. \cite{tsvl} of Tsfasman and Vladut).
	For example, it is the case if we are interested in the existence of non-special divisors of degree $g-1$ in algebraic function fields of genus 
	$g$ defined over a finite field.  Indeed, such an existence was proved in \cite{balb} or in \cite{bariro} from the properties of the Zeta-function's $L$-polynomial 
	for algebraic function fields over finite constant fields. In particular, the existence is guaranteed when the coefficients of the L-polynomial are all positive (which is the case for maximal function fields). 
	More generally, this is the case when the quantity $S=a_g+2\sum_{i=0}^{g-1}a_i$ is positive. So the main motivation behind this article is ultimately to show methods 
	for obtaining information about the sign of the coefficients or the sign of the  quantity $S$.	
	
	\subsection{New results and organisation}
	
	This paper is organized as follow. In section \ref{preli}, we introduce notations and we recall a few results which we will use to prove the main result Theorem \ref{AiFoncSi}. In section \ref{main}, we give a new explicit formula of the coefficients of the $L$-polynomial. So far, these coefficients were defined only by induction (see \cite{stic}). Moreover, we apply this formula to give a new expression of the class number $h$ (see notations in section \ref{preli}). Finally, in section \ref{exam} and for a defect 2 curves over $\F_2$, we use the explicit formula and a result on the reciprocals of the roots of $L$, given by J.P. Serre to establish a few properties of the coefficients concluding with their sign and variation. 
	
	\section{Preliminaries}\label{preli}
	
	\subsection{Notation.} 
	
	Let us recall the usual notation (for the basic notions related
	to an algebraic function field $\FF/\F_q$ see \cite{stic}). Let $\FF/\F_q$ be a          function field of genus $g$.
	For any integer $k\geq 1$ we denote by $\bP_k(\FF/\F_q)$ 
	the set of places of degree $k$, by $\mathcal{B}_k(\FF/\F_q)$ the 
	cardinality of this set and by $\bP(\FF/\F_q)=\cup_k \bP_k(\FF/\F_q)$.
	The divisor group of $\FF/\F_q$  is denoted by ${\div}(\FF/\F_q)$.
	If a divisor $D \in {\div}(\FF/\F_q)$ is such that
	$$D= \sum_{P \in \bP(\FF/\F_q)} n_P P,$$
	the support of $D$ is the following finite set
	$$\supp (D)=\{ P \in \bP(\FF/\F_q)~|~n_P \neq 0\}$$
	and its degree is
	$$\deg(D)=\sum_{P\in \bP(\FF/\F_q)} n_P\deg(P).$$
	We denote by ${\div}_n(\FF/\F_q)$ the set of divisors of degree $n$.
	We say that the divisor $D$ is \emph{effective} if for each $P\in \supp(D)$
	we have $n_P \geq 0$ and we denote ${\div}_n^+(\FF/\F_q)$ the set of effective     
	divisors of degree $n$ and $A_{n}=\# {\div}_n^+(\FF/\F_q)$.\\
	
	Let $x \in \FF/\F_q$, we denote by $(x)$ the divisor
	associated to the rational function $x$, namely
	$$(x)=\sum_{P\in\bP(\FF/\F_q)} v_P(x)P,$$
	where $v_P$ is the valuation at the place $P$.
	Such a divisor $(x)$ is called a principal divisor, and
	the set of  principal divisors is
	a subgroup of ${\div}_0(\FF/\F_q)$ denoted by ${\pp}(\FF/\F_q)$.
	The factor group 
	$${\mathcal C}(\FF/\F_q)={\div}(\FF_q)/{\pp}(\FF/\F_q)$$
	is called the divisor class group. If $D_1$ and $D_2$ are
	in the same class, namely if the divisor $D_1-D_2$ is principal,
	we will write $D_1 \sim D_2$. We will denote
	by $[D]$ the class of the divisor $D$.\\
	
	The \emph{dimension} of a divisor $D$, denoted by $\dim(D)$, is the dimension
	of the vector space ${\mathcal L}(D)$ defined by the formula
	$${\mathcal L}(D)= \{ x \in \FF ~|~(x)\geq -D\}\cup \{0\}.$$
	By Riemann-Roch Theorem we know that the dimension of this vector space is related to the genus of $\FF$ and the degree of $D$ by 
	\begin{equation} \label{RR}
		\dim (D) = \deg (D)-g+1 +\dim (\kappa-D),
	\end{equation}
	where $\kappa$ denotes a canonical divisor of $\FF/\F_q$ of degree $2g-2$.
	In this relation, the complementary term $i(D) =\dim (\kappa-D)$ is called the \emph{index of speciality} and is not easy to compute in general. In particular, a divisor $D$ is non-special when the index of speciality $i(D)$ is zero.\\
	
	If $D_1 \sim D_2$, the following holds
	$$\deg(D_1)=\deg(D_2), \quad \dim(D_1)=\dim(D_2),$$
	so that we can define the degree 
	$\deg([D])$ and the dimension $\dim([D])$ of a class.
	Since the degree of a principal divisor is $0$, we can define the subgroup
	${\mathcal C}(\FF/\F_q)^0$ of classes of degree $0$ divisors in ${\mathcal C}(\FF/\F_q)$.
	It is a finite group and we denote by $h$ its order, called the \emph{class number} of $\FF/\F_q$. Moreover if 
	$$L(t)=\sum_{i=0}^{2g} a_i t^i=\prod_{i=1}^{g} [(1-\alpha_i t)(1-\overline{\alpha_i} t)]$$
	with $|\alpha_i|=\sqrt{q}$ is the numerator of the Zeta function of $\FF/\F_q$, we have  
	$h=L(1)$. \\
	
	\subsection{Elementary results}
	
	The following results will be applied to prove the main results.\\
	We consider for $r \geq 1$ the number
	$$N_r := N(\FF_r) = |\{ P \in \P_{\FF_r}; deg(P)=1 \}|$$
	where $\FF_r = \FF \F_{q^r}$ is the constant field extension of $\FF/\F_q$ of degree       $r$. Let us remind the equation from \cite{stic}*{Corollary 5.1.16}, for all $r         \geq 1$,
	\begin{equation} 
		N_r = q^r+1-\sum_{i=1}^{2g} \alpha_i^r  \label{NiFoncAl} 
	\end{equation}
	where $\alpha_1, \ldots, \alpha_{2g} \in \C$ are the reciprocals of the roots of $L(t)$. In particular, since $N_1=N(\FF)$, we have
	$$ N(\FF) = q +1 - \sum_{i=1}^{2g} \alpha_i. $$
	
	\begin{proposition} \label{RelRec} \cite{stic}*{Corollary 5.1.17}
		Let $L(t) = \sum_{i=0}^{2g} a_i t^i$ be the $L$-polynomial of $\FF/\F_q$, and $S_r := N_r-(q^r+1)$. Then we have:
		\begin{enumerate}[label={(\alph*)}]
			\item $L'(t)/L(t) = \sum_{r=1}^{\infty} S_r t^{r-1}$.
			\item $a_0 = 1$ and 
			\begin{equation}
				i a_i = S_i a_0 + S_{i-1} a_1 + \cdots + S_1 a_{i-1} \label{RelRecEg}
			\end{equation}
			for $i=1, \ldots, g$.\\
		\end{enumerate}
		Given $N_1, \ldots, N_g$ and using $a_{2g-i}=q^{g-i}a_i$, we can determine $L(t)$ from \eqref{RelRecEg}.
	\end{proposition}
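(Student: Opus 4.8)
The plan is to work directly from the factorisation $L(t)=\prod_{i=1}^{2g}(1-\alpha_i t)$, where I list all $2g$ reciprocal roots (the $\alpha_i$ together with the $\overline{\alpha_i}$), and to treat both assertions as manipulations of power series that converge near $t=0$. The equality $a_0=1$ is immediate, since $a_0=L(0)=\prod_i(1-0)=1$.

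For part (a) I would take the logarithmic derivative of the product. From $\log L(t)=\sum_{i=1}^{2g}\log(1-\alpha_i t)$, differentiation gives $L'(t)/L(t)=\sum_{i=1}^{2g}\frac{-\alpha_i}{1-\alpha_i t}$. Each summand expands as a geometric series $\frac{-\alpha_i}{1-\alpha_i t}=-\sum_{r\geq 1}\alpha_i^{\,r}\,t^{\,r-1}$, valid for $|t|<q^{-1/2}$ because $|\alpha_i|=\sqrt q$. Summing over $i$ and interchanging the two absolutely convergent sums yields $L'(t)/L(t)=\sum_{r\geq 1}\bigl(-\sum_{i=1}^{2g}\alpha_i^{\,r}\bigr)\,t^{\,r-1}$. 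By \eqref{NiFoncAl} one has $-\sum_{i=1}^{2g}\alpha_i^{\,r}=N_r-(q^r+1)=S_r$, which is exactly the claimed identity.

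For part (b) I would clear the denominator in (a), writing $L'(t)=\bigl(\sum_{r\geq1}S_r t^{\,r-1}\bigr)L(t)$, and then compare coefficients. On the left, $L'(t)=\sum_{i=1}^{2g} i\,a_i\,t^{\,i-1}$; on the right, the Cauchy product of $\sum_{r\geq1}S_r t^{\,r-1}$ with $\sum_{j\geq0}a_j t^{\,j}$ has $t^{\,i-1}$-coefficient $\sum_{r=1}^{i}S_r a_{i-r}$. Matching the coefficient of $t^{\,i-1}$ for each $i$ with $1\leq i\leq g$ (the same computation in fact works up to $i=2g$) gives $i a_i = S_i a_0+S_{i-1}a_1+\cdots+S_1 a_{i-1}$, which is \eqref{RelRecEg}.

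Finally, for the concluding statement, knowing $N_1,\dots,N_g$ determines $S_1,\dots,S_g$; feeding these into \eqref{RelRecEg} together with $a_0=1$ determines $a_1,\dots,a_g$ one at a time by induction on $i$; and the functional equation $a_{2g-i}=q^{g-i}a_i$ (coming from the symmetry $\alpha\mapsto q/\alpha$ of the reciprocal roots) then recovers $a_{g+1},\dots,a_{2g}$, hence $L(t)$ in full. I expect no serious obstacle here: the whole argument is formal once \eqref{NiFoncAl} is granted, and the only points requiring care are the convergence of the power series near $t=0$ and the justification of interchanging the order of summation, both of which are controlled by the bound $|\alpha_i|=\sqrt q$.
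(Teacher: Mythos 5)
Your proof is correct, and it is essentially the standard argument for this result: the paper itself gives no proof, citing it directly from Stichtenoth (Corollary 5.1.17), where the proof is precisely this logarithmic-derivative computation combined with \eqref{NiFoncAl} and comparison of coefficients in $L'(t)=L(t)\sum_{r\geq 1}S_r t^{r-1}$. Nothing to correct.
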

	
	Let 
	$$ \mathbf{S} = (S_1, S_2, S_3, \ldots) $$
	be an infinite sequence of integers with $S_i$ defined in Proposition \ref{RelRec}, then the equality \eqref{RelRecEg} can be defined as a linear infinite recurrence relation, for $n \geq 1$
	\begin{equation}
		a_n = \frac{1}{n} \sum_{i=1}^{\infty} S_i a_{n-i} \label{InfRec}
	\end{equation}
	with $a_0 = 1$ and $a_{<0}=0$.\\
	\\
	T. Goy and R. Zatorsky \cite{goza} studied a similar infinite recurrence using the parapermanent of a triangular matrix defined below. The same tools will be used in the proof of our main result. \\
	\\
	A triangular number table
	\begin{equation} \label{An}
		B_n =
		\begin{pmatrix}
			b_{1,1} &        &        & \\
			b_{2,1} & b_{2,2} &        & \\
			\vdots & \vdots & \ddots & \\
			b_{n,1} & b_{n,2} & \ldots & b_{n,n} 
		\end{pmatrix}
	\end{equation} 
	is a \textit{triangular matrix} of order $n$. We note $B_n= (b_{i,j})_{1 \leq j \leq i \leq n}$.\\
	To each element $b_{i,j}$ of matrix \eqref{An} we assign $(i-j+1)$ elements $b_{i,k}$, where $k=j, \ldots, i$, which are called the \textit{derived elements} of the matrix determined by the \textit{key element} $b_{i,j}$. The product of all derived elements determined by an element $b_{i,j}$ is called the \textit{factorial product} of the element $b_{i,j}$ and denoted by
	$$\{ b_{i,j} \} = \prod_{k=j}^{i} b_{i,k} .$$
	A tuple of key elements is called a \textit{normal tuple} if their derived elements form a set of elements of cardinality $n$, no two of which belong to the same column of the matrix.\\
	Let $\P(n)$ be the set of all ordered partitions of a positive integer $n$ into positive integer summands, we said that $\P(n)$ is the set of compositions of $n$, each integer of a composition is called \textit{a part}. We know, also, that $\#\P(n) = 2^{n-1}$. A one-to-one correspondence between elements of $\P(n)$ and normal tuples of key elements of matrix \eqref{An} was established by Tarakanov and Zatorsky \cite{taza}, namely,
	
	$$(n_1, n_2, \ldots, n_r) \in \P(n) \leftrightarrow\  \left\{\begin{array}{ll}
		\hbox{the normal tuple:}\ (b_{N_1,N_0+1},b_{N_2,N_1+1}, \ldots, b_{N_r,N_{r-1}+1}),\\
		\hbox{where}\ N_0 = 0\ \mbox{ and }\ N_s= \sum_{i=1}^{s} n_i\ \mbox{ for }\ s=1,2, \ldots,r.\\
	\end{array}
	\right.  $$
	
	The \textit{parapermanent} of the matrix \eqref{An} is defined as
	$$ pper(B_n) = \sum_{(m_1,\cdots,m_r) \in \P(n)} \prod_{s=1}^{r} \{ b_{i(s),j(s)} \} $$
	where $b_{i(s),j(s)}$ is the key element corresponding to $m_s$, the $s$-th component of the composition $(m_1, \ldots, m_r) \in \P(n)$. It can also be written as follow
	\begin{equation}
		pper(B_n) = \sum_{r=1}^{n}\ \sum_{m_1+\cdots+m_r=n}\ \prod_{s=1}^{r} \{ b_{m_1+\cdots+m_s , m_1+\cdots+m_{s-1}+1} \}. \label{Def2pper}
	\end{equation}
	
	and can be decomposed by elements of the last row as mentioned in \cite{zato}, \textit{i.e.},
	\begin{equation}
		pper(B_n) = \sum_{s=1}^{n} \{ b_{n,s} \} pper(B_{s-1}),  \label{Decpper}
	\end{equation}
	where by definition, $pper(B_0)=1$.
	
	\begin{example}
		Let us determine $pper(B_3)$, we have for\\
		
		$$r=3 \rightarrow\ (1,1,1) \rightarrow  \left\{\begin{array}{lll}
			s=1 \rightarrow \{b_{i(1),j(1)} \}=\{b_{1,1} \}= b_{1,1},\\
			s=2 \rightarrow \{b_{i(2),j(2)} \}=\{b_{2,2} \}= b_{2,2},\\
			s=3 \rightarrow \{b_{i(3),j(3)} \}=\{b_{3,3} \}= b_{3,3} \\
		\end{array}
		\right.  $$
		$$r=2 \rightarrow\ \left\{\begin{array}{ll} 
			(1,2) \rightarrow  \left\{\begin{array}{ll}
				s=1 \rightarrow \{b_{i(1),j(1)} \}=\{b_{1,1} \}= b_{1,1},\\
				s=2 \rightarrow \{b_{i(2),j(2)} \}=\{b_{3,2} \}= b_{3,2} \cdot                         b_{3,3},\\
				
			\end{array}
			\right.   \\
			(2,1) \rightarrow  \left\{\begin{array}{ll}
				s=1 \rightarrow \{b_{i(1),j(1)} \}=\{b_{2,1} \}= b_{2,1}                            \cdot b_{2,2},\\
				s=2 \rightarrow \{b_{i(2),j(2)} \}=\{b_{3,3} \}=                        b_{3,3}\\
				
			\end{array}
			\right.\\
		\end{array}
		\right.
		$$
		\\
		$$ r=1 \rightarrow (3) \rightarrow \{b_{i(1),j(1)} \}=\{b_{3,1} \}= b_{3,1}                            \cdot b_{3,2} \cdot b_{3,3} $$
		Thus,
		$$pper(B_3) = b_{1,1} \cdot b_{2,2} \cdot b_{3,3} + b_{1,1} \cdot b_{3,2} \cdot b_{3,3} + b_{2,1} \cdot b_{2,2} \cdot b_{3,3} +b_{3,1} \cdot b_{3,2} \cdot b_{3,3}.$$
	\end{example}
	
	\section{Main result} \label{main}
	
	In this section, we provide an explicit formula of the coefficients of the $L$-polynomial. From this formula, we deduce a new expression of the class number $h$. 
	\begin{theorem} \label{AiFoncSi}
		Let $L(t) = \sum_{i=0}^{2g} a_i t^i$ be the $L$-polynomial of $\FF/\F_q$, and \linebreak[4] $S_r := N_r-(q^r+1)$. For $i=1, \ldots, g$, the following equations are equivalent:
		\begin{enumerate}[label={(\alph*)}]
			\item $$a_i = \frac{1}{i} \sum_{j=1}^{\infty} S_j a_{i-j}$$
			where $a_0 = 1$ and $a_{<0}=0$.
			\item
			$$a_i = pper
			\begin{pmatrix}
				S_1 &        &        &     &\\
				\frac{S_2}{S_1} & \frac{S_1}{2} &        &  &\\
				\frac{S_3}{S_2}& \frac{S_2}{S_1} & \frac{S_1}{3} \\
				\vdots & \vdots & \vdots & \ddots &\\
				\frac{S_i}{S_{i-1}} & \frac{S_{i-1}}{S_{i-2}} & \frac{S_{i-2}}{S_{i-3}} & \ldots & \frac{S_1}{i}
			\end{pmatrix}
			$$
			\item $$ a_i = \sum_{(m_1,\ldots,m_r)\in \P(i)} \prod_{s=1}^{r} \frac{S_{m_s}}{m_1+\cdots+m_s} = \sum_{(m_1,\ldots,m_r)\in \P(i)} \prod_{s=1}^{r} \frac{N_{m_s}-(q^{m_s}+1)}{m_1+\cdots+m_s} $$
		\end{enumerate}
	\end{theorem}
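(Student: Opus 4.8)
The plan is to prove the chain of equivalences $(a) \Leftrightarrow (b) \Leftrightarrow (c)$, where $(a)$ is exactly the recurrence \eqref{InfRec} established in Proposition \ref{RelRec}, so no work is needed there beyond restating it. The real content lies in showing that the parapermanent in $(b)$ and the composition-sum in $(c)$ both solve this same recurrence, together with the matching initial condition $a_0 = 1$. Since the specific triangular matrix $B_i$ in $(b)$ has entries $b_{k,\ell} = S_{k-\ell+1}/S_{k-\ell}$ for $\ell < k$ and diagonal entries $b_{k,k} = S_1/k$ (with the convention that $S_0$-type denominators telescope away inside factorial products), the equivalence $(b) \Leftrightarrow (c)$ is essentially definitional: formula \eqref{Def2pper} for the parapermanent expands it as a sum over all compositions of $i$, and I would verify that the factorial product $\{b_{N_s, N_{s-1}+1}\}$ attached to the $s$-th part telescopes precisely to $S_{m_s}/(m_1+\cdots+m_s)$.

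The key computation is this telescoping. For a part $m_s$ starting at row-index $N_{s-1}+1$ and ending at $N_s = m_1+\cdots+m_s$, the factorial product is
\begin{equation}
\{b_{N_s, N_{s-1}+1}\} = \prod_{k=N_{s-1}+1}^{N_s} b_{N_s, k}.
\end{equation}
Reading off the matrix entries, the last factor $k = N_s$ contributes the diagonal term $S_1/N_s$, while each earlier factor $k = N_s - p$ for $p = 1, \ldots, m_s-1$ contributes $S_{p+1}/S_p$. I would check that the product of these off-diagonal ratios telescopes to $S_{m_s}/S_1$, so that the whole factorial product collapses to $(S_{m_s}/S_1)\cdot(S_1/N_s) = S_{m_s}/N_s = S_{m_s}/(m_1+\cdots+m_s)$, which is exactly the factor appearing in $(c)$. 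Summing over all compositions then gives $(b) = (c)$ term by term.

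To close the loop I would prove $(c) \Rightarrow (a)$ (or equivalently use the parapermanent's row-expansion \eqref{Decpper} to get $(b) \Rightarrow (a)$, which is cleaner). Using \eqref{Decpper} with the last row of $B_i$, one has $pper(B_i) = \sum_{s=1}^{i} \{b_{i,s}\}\, pper(B_{s-1})$; the telescoping above shows $\{b_{i,s}\} = S_{i-s+1}/i$, so this reads $pper(B_i) = \tfrac{1}{i}\sum_{s=1}^{i} S_{i-s+1}\, pper(B_{s-1})$. Reindexing with $j = i-s+1$ turns this into $\tfrac{1}{i}\sum_{j=1}^{i} S_j\, pper(B_{i-j})$, which is precisely recurrence $(a)$ once we know inductively that $pper(B_{s-1}) = a_{s-1}$, with the base case $pper(B_0) = 1 = a_0$. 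The main obstacle I anticipate is purely bookkeeping: one must handle the boundary behaviour of the $S_{k-\ell}/S_{k-\ell}$ ratios carefully (these are the off-diagonal entries), confirm the telescoping is valid whenever the relevant $S_p$ are nonzero, and make sure the index shifts in the composition-to-normal-tuple correspondence line up, since an off-by-one in $N_{s-1}+1$ versus $N_s$ would break the identification. Given that the recurrence in $(a)$ is already supplied by Proposition \ref{RelRec}, the entire theorem reduces to verifying this single telescoping identity and invoking the uniqueness of solutions to the recurrence with fixed initial data.
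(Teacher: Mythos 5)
Your proposal is correct and follows essentially the same route as the paper: the paper also reduces everything to the single telescoping identity $\{b_{n,j}\} = S_{n-j+1}/n$, using the row-expansion formula \eqref{Decpper} plus induction (with base case $pper(B_0)=1=a_0$) to identify the parapermanent with the recurrence in (a), and the composition-expansion \eqref{Def2pper} to identify (b) with (c). The only cosmetic difference is the order in which the two equivalences are dispatched, and your cautionary remark about nonvanishing of the intermediate $S_p$ is a point the paper itself leaves implicit.
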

	
	\begin{proof}
		Let
		$$B_n = 
		\begin{pmatrix}
			S_1 &        &        &     &\\
			\frac{S_2}{S_1} & \frac{S_1}{2} &        &  &\\
			\frac{S_3}{S_2}& \frac{S_2}{S_1} & \frac{S_1}{3} \\
			\vdots & \vdots & \vdots & \ddots &\\
			\frac{S_n}{S_{n-1}} & \frac{S_{n-1}}{S_{n-2}} & \frac{S_{n-2}}{S_{n-3}} & \ldots & \frac{S_1}{n}
		\end{pmatrix}
		$$
		First, we prove the equivalence between (a) and (b) by induction. By definition, $pper(B_0)= 1 =a_0$. Suppose that the equivalence is true for all $i < n$.
		by \eqref{Decpper}, we have 
		$$pper(B_n) = \sum_{j=1}^{n} \{ b_{n,j} \} pper(B_{j-1})$$
		by induction hypothesis, we have
		$$pper(B_{j-1}) = a_{j-1}$$
		finally, by construction we have
		$$ b_{n,j} = \frac{S_{n+1-j}}{S_{n-j}} \Leftrightarrow \{ b_{n,j} \} = \left\{ \frac{S_{n+1-j}}{S_{n-j}} \right\} = \frac{S_{n+1-j}}{n}.$$
		Thus, 
		$$a_n = pper(B_n) = \sum_{j=1}^{n} \left\{ \frac{S_{n+1-j}}{S_{n-j}} \right\} a_{j-1}$$
		$$a_n = \sum_{j=1}^{n}  \frac{S_{n+1-j}}{n} a_{j-1}$$
		$$ a_n = \frac{1}{n}\sum_{j=0}^{n-1}  S_{n-j} a_{j} $$
		
		Now, we prove the equivalence between (b) and (c). Since by \eqref{Def2pper}
		$$ pper(B_n) = \sum_{r=1}^{n} \sum_{m_1+\cdots+m_r=n} \prod_{s=1}^{r} \{ b_{m_1+\cdots+m_s , m_1+\cdots+m_{s-1}+1} \}$$
		$$pper(B_n) = \sum_{(m_1,\ldots,m_r) \in \P(n)} \prod_{s=1}^{r} \{ b_{m_1+\cdots+m_s , m_1+\dots+m_{s-1}+1} \}$$
		it is sufficient to notice, as in the first part of the proof, that
		$$ b_{n,j} = \frac{S_{n+1-j}}{S_{n-j}} \Leftrightarrow \{ b_{n,j} \} = \left\{ \frac{S_{n+1-j}}{S_{n-j}} \right\} = \frac{S_{n+1-j}}{n}$$
		
		thus
		$$\{ b_{m_1+\cdots+m_s,m_1+\cdots+m_{s-1}+1} \} = \frac{S_{m_1+\cdots+m_s+1-(m_1+\cdots+m_{s-1}+1)}}{m_1+\ldots+m_s}$$
		$$\{ b_{m_1+\cdots+m_s,m_1+\cdots+m_{s-1}+1} \} = \frac{S_{m_s}}{m_1+\cdots+m_s}.$$
	\end{proof}
	
	\begin{example}
		We want to determine $a_4$. We have
		
		$$\P(4) = \{(4),(1,3),(3,1),(2,2),(1,1,2),(1,2,1),(2,1,1),(1,1,1,1)\}$$
		
		\begin{equation*}
			r=1 \longrightarrow \{(4) \} \in \P(4) \longrightarrow \sum_{4} \prod_{s=1}^{1}\{ b_{m_1+\cdots+m_s,m_1+\cdots+m_{s-1}+1} \} = \{ b_{4,1} \} = \frac{S_4}{4}
		\end{equation*}
		
		\begin{equation*}
			\begin{split}
				r=2 \longrightarrow \{(1,3),(3,1),(2,2) \} \in \P(4) \longrightarrow \sum\limits_{\substack{1+3 \\ 3+1 \\ 2+2}} \prod_{s=1}^{2}\{ b_{m_1+\cdots+m_s,m_1+\cdots+m_{s-1}+1} \} = \\ \{ b_{1,1} \} \cdot \{ b_{4,2} \}+\{ b_{3,1} \} \cdot \{ b_{4,4} \}+ \{ b_{2,1} \} \cdot \{ b_{4,3} \} = \frac{S_1}{1}\frac{S_3}{4} + \frac{S_3}{3} \frac{S_1}{4}+ \frac{S_2}{2} \frac{S_2}{4}
			\end{split}
		\end{equation*}
		
		\begin{equation*}
			\begin{split}
				r=3 \longrightarrow \{(1,1,2),(1,2,1),(2,1,1) \} \in \P(4) \longrightarrow   \sum\limits_{\substack{1+1+2 \\ 1+2+1 \\ 2+1+1}} \prod_{s=1}^{3}\{ b_{m_1+\cdots+m_s,m_1+\cdots+m_{s-1}+1} \} = \\  \{ b_{1,1} \} \cdot \{ b_{2,2} \} \cdot \{ b_{4,3} \}+\{ b_{1,1} \} \cdot \{ b_{3,2} \} \cdot \{ b_{4,4} \}+ \{ b_{2,1} \} \cdot \{ b_{3,3} \} \cdot \{ b_{4,4} \} = \\ \frac{S_1}{1}\frac{S_1}{2} \frac{S_2}{4} + \frac{S_1}{1} \frac{S_2}{3} \frac{S_1}{4}+ \frac{S_2}{2} \frac{S_1}{3} \frac{S_1}{4}
			\end{split}
		\end{equation*}
		
		\begin{equation*}
			\begin{split}
				r=4 \longrightarrow \{(1,1,1,1) \} \in \P(4) \longrightarrow \sum_{1+1+1+1} \prod_{s=1}^{4}\{ b_{m_1+\cdots+m_s,m_1+\cdots+m_{s-1}+1} \} = \\ \{ b_{1,1} \} \cdot \{ b_{2,2} \} \cdot \{ b_{3,3} \} \cdot \{ b_{4,4} \} = \frac{S_1}{1}\frac{S_1}{2} \frac{S_1}{3} \frac{S_1}{4} 
			\end{split}
		\end{equation*}
		finally 
		\begin{eqnarray*}
			a_4 & = &\frac{S_4}{4}+\frac{S_1}{1}\frac{S_3}{4} + \frac{S_3}{3} \frac{S_1}{4}+ \frac{S_2}{2} \frac{S_2}{4}+\frac{S_1}{1}\frac{S_1}{2} \frac{S_2}{4} + \frac{S_1}{1} \frac{S_2}{3} \frac{S_1}{4}+ \frac{S_2}{2} \frac{S_1}{3} \frac{S_1}{4}+\frac{S_1}{1}\frac{S_1}{2} \frac{S_1}{3} \frac{S_1}{4} \\
			& = &\frac{S_4}{4}+\frac{S_1S_3}{3}+\frac{S_2^2}{8}+\frac{S_1^2S_2}{4}+\frac{S_1^4}{24}.
		\end{eqnarray*}
		with this, we conclude our example.
	\end{example}
	
	We can also give an explicit formula of the coefficients $a_i$ depending on the reciprocals of the roots of $L$. But first, let's give an expression of $N_m$ depending on these roots, see \cite{stic}*{Corollary 5.1.16}:
	
	$$N_m =  q^m+1- \sum_{i=1}^{2g} \alpha_i^m $$

	which can also be written 
	
	\begin{equation}
		N_m = q^m+1- \sum_{i=1}^{g} 2q^{m/2} cos(\pi m \phi_i)  \label{Mobius}
	\end{equation}
	
	where $(\alpha_1,\ldots,\alpha_{2g}) = (q^{1/2}e^{i\pi \phi_1},\ldots,q^{1/2}e^{i\pi \phi_g},q^{1/2}e^{-i\pi \phi_1},\ldots,q^{1/2}e^{-i\pi \phi_g})$ are the reciprocals of the roots of $L(t)$ with $\phi_i \in [0,1]$.
	The following results are the consequences of Theorem \ref{AiFoncSi}, \eqref{NiFoncAl} and \eqref{Mobius}.
	
	\begin{proposition} \label{CoefRr}
		Let $L(t) = \sum_{i=0}^{2g} a_i t^i$ be the $L$-polynomial of $\FF/\F_q$. We have
		\begin{eqnarray*}
			a_i & = &\sum_{(m_1,\ldots,m_r)\in \P(i)} \prod_{s=1}^{r} (-1)^r \frac{\sum_{j=1}^{g}2q^{m_s/2}cos(\pi m_s \phi_j)}{m_1+\cdots+m_s}  \\
			& = &\sum_{(m_1,\ldots,m_r)\in \P(i)} \prod_{s=1}^{r} (-1)^r \frac{\sum_{j=1}^{2g}\alpha_j^{m_s}}{m_1+\cdots+m_s} 
		\end{eqnarray*} 
		for $i=1, \ldots, g$ and $a_{2g-i}=q^{g-i}a_i$ with $a_0=1$.
	\end{proposition}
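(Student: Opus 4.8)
The plan is to obtain Proposition \ref{CoefRr} as a direct corollary of the explicit formula in Theorem \ref{AiFoncSi}(c), simply by rewriting the power sums $S_m$ in terms of the reciprocals of the roots of $L(t)$. The entire content of the argument is a substitution together with careful bookkeeping of the sign, so I expect no genuine obstacle beyond uniformly extracting a factor $(-1)^r$.

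First, I would record the two closed forms for $S_m$. Since $S_m = N_m - (q^m+1)$ by definition, equation \eqref{NiFoncAl} gives immediately
$$S_m = -\sum_{j=1}^{2g} \alpha_j^m,$$
while the equivalent expression \eqref{Mobius} gives
$$S_m = -\sum_{j=1}^{g} 2q^{m/2}\cos(\pi m \phi_j).$$
These are the same statement written in two ways: the negative of the $m$-th power sum of the $\alpha_j$, once over the full set of $2g$ reciprocal roots and once after pairing each $\alpha_j = q^{1/2}e^{i\pi\phi_j}$ with its conjugate $\overline{\alpha_j} = q^{1/2}e^{-i\pi\phi_j}$ through $\alpha_j^m + \overline{\alpha_j}^{\,m} = 2q^{m/2}\cos(\pi m \phi_j)$.

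Next, I would substitute these into Theorem \ref{AiFoncSi}(c). For a composition $(m_1,\ldots,m_r)\in\P(i)$ with $r$ parts, each factor $S_{m_s}/(m_1+\cdots+m_s)$ carries exactly one minus sign, so the product over the $r$ parts picks up the overall sign $(-1)^r$:
$$\prod_{s=1}^{r} \frac{S_{m_s}}{m_1+\cdots+m_s} = (-1)^r \prod_{s=1}^{r} \frac{\sum_{j=1}^{2g}\alpha_j^{m_s}}{m_1+\cdots+m_s}.$$
Summing over all compositions of $i$ produces the second displayed formula of the proposition, and using the cosine form of $S_m$ produces the first. I would add the remark that placing the sign as $(-1)^r$ inside the product over $s$, as the statement does, is harmless, since $\bigl((-1)^r\bigr)^r = (-1)^{r^2} = (-1)^r$ because $r^2 \equiv r \pmod 2$.

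Finally, the symmetry relation $a_{2g-i} = q^{g-i}a_i$ together with $a_0 = 1$ is precisely the functional equation already invoked in Proposition \ref{RelRec} and requires no further argument. The proposition is thus a formal consequence of Theorem \ref{AiFoncSi} and the elementary power-sum identity for the reciprocal roots; the only point demanding attention is the uniform extraction of the sign $(-1)^r$ from the $r$ factors of each term, which I would make explicit exactly as above.
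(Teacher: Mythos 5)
Your proposal is correct and takes essentially the same route as the paper: the paper's proof likewise consists of applying Theorem \ref{AiFoncSi}(c) and substituting $S_m = N_m-(q^m+1) = -\sum_{j=1}^{2g}\alpha_j^m = -\sum_{j=1}^{g}2q^{m/2}\cos(\pi m \phi_j)$, so that each of the $r$ factors contributes one minus sign. Your explicit remark that writing the sign as $(-1)^r$ inside the product over $s$ is harmless, since $(-1)^{r^2}=(-1)^r$, is a notational point the paper leaves implicit.
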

	
	\begin{proof}
		We use Theorem \ref{AiFoncSi} (3) and
		$$N_m = q^m+1- \sum_{i=1}^{g} 2q^{m/2} cos(\pi m \phi_i)= q^m+1- \sum_{i=1}^{2g} \alpha_i^m.$$
	\end{proof}
	
	We conclude by a new expression of the class number $h$ depending on the previous parameters.
	
	\begin{proposition} \label{h}
		Let $L(t) = \sum_{i=0}^{2g} a_i t^i$ be the $L$-polynomial of $\FF/\F_q$ and $h$ the class number. We have
		
		\begin{eqnarray*}
			h& = &1+q^g+\sum_{i=1}^{g-1}(1+q^{g-i}) \sum_{(m_1,\ldots,m_r)\in \P(i)} \prod_{s=1}^{r} (-1)^r \frac{\sum_{j=1}^{g}2q^{m_s/2}cos(\pi m_s \phi_j)}{m_1+\cdots+m_s}+ \\
			&   & \sum_{(m_1,\ldots,m_r)\in \P(g)} \prod_{s=1}^{r} (-1)^r \frac{\sum_{j=1}^{g}2q^{m_s/2}cos(\pi m_s \phi_j)}{m_1+\cdots+m_s}\\
			& = &1+q^g+\sum_{i=1}^{g-1}(1+q^{g-i}) \sum_{(m_1,\ldots,m_r)\in \P(i)} \prod_{s=1}^{r} (-1)^r \frac{\sum_{j=1}^{2g}\alpha_j^{m_s}}{m_1+\cdots+m_s} + \\
			&   &\sum_{(m_1,\ldots,m_r)\in \P(g)} \prod_{s=1}^{r} (-1)^r \frac{\sum_{j=1}^{2g}\alpha_j^{m_s}}{m_1+\cdots+m_s}\\
			& = & 1+q^g+\sum_{i=1}^{g-1}(1+q^{g-i}) \sum_{(m_1,\ldots,m_r)\in \P(i)} \prod_{s=1}^{r} \frac{N_{m_s}-(q^{m_s}+1)}{m_1+\cdots+m_s}+ \\
			&   &\sum_{(m_1,\ldots,m_r)\in \P(g)} \prod_{s=1}^{r} \frac{N_{m_s}-(q^{m_s}+1)}{m_1+\cdots+m_s}
		\end{eqnarray*}
	\end{proposition}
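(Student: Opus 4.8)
The plan is to begin from the identity $h = L(1)$ recalled in the notation subsection, which expands directly to
$$ h = \sum_{i=0}^{2g} a_i. $$
The strategy is then to reduce this full sum over $i = 0, \ldots, 2g$ to the low-index coefficients $a_0, \ldots, a_g$, which are exactly the ones for which Proposition \ref{CoefRr} supplies a closed form, using the functional equation $a_{2g-i} = q^{g-i} a_i$ to rewrite the high-index half in terms of the low-index one.

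Concretely, I would split off three kinds of terms. The endpoint satisfies $a_{2g} = q^{g} a_0 = q^{g}$ and combines with $a_0 = 1$; the involution $i \mapsto 2g - i$ fixes the middle index, so $a_g$ stands on its own; and for each $i \in \{1, \ldots, g-1\}$ the pair contributes $a_i + a_{2g-i} = (1 + q^{g-i}) a_i$. Assembling these pieces gives
$$ h = 1 + q^{g} + \sum_{i=1}^{g-1} (1 + q^{g-i}) a_i + a_g. $$

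It then remains only to insert the explicit value of $a_i$ from Proposition \ref{CoefRr}, which is valid for all $i = 1, \ldots, g$ and hence covers both the paired terms and the lone term $a_g$. Using in succession the $\cos(\pi m_s \phi_j)$ expression, the $\alpha_j^{m_s}$ expression, and the $N_{m_s} - (q^{m_s}+1)$ expression from that proposition produces the three stated equalities, since these forms are equal summand by summand. I anticipate no genuine obstacle here: the argument is a direct rearrangement, and the only point demanding care is the boundary bookkeeping, namely checking that $a_0$ and $a_{2g}$ are treated consistently with the pairing and that the self-paired middle term $a_g$ is counted exactly once rather than doubled.
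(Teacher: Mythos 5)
Your proposal is correct and matches the paper's own proof, which likewise starts from $h = L(1) = \sum_{i=0}^{2g} a_i$, uses $a_{2g-i} = q^{g-i}a_i$ with $a_0 = 1$, substitutes Proposition \ref{CoefRr}, and notes that the middle term $a_g$ must be handled separately to avoid counting it twice. Your write-up simply makes explicit the pairing bookkeeping that the paper leaves implicit.
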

	
	\begin{proof}
		Since $h=L(1)=\sum_{i=0}^{2g}a_i$ with $a_0=1$ and $a_{2g}=q^g a_0$, this result is a consequence of Proposition \ref{CoefRr}. Note that the case $i=g$ has to be treated separately, otherwise $a_g$ will be counted twice.  
	\end{proof}

	\section{Example of application} \label{exam}
	
	In this section, we will exhibit an application for the explicit formula in Proposition \ref{CoefRr}. This application is not the simple calculation of the coefficients of the $L$-polynomial, since one can easily show that the complexity of the formula in \ref{AiFoncSi} is exponential.\\
	This application consists of giving some of these coefficients properties. These latter will allow us to determine the sign and the variation of the sequence $(a_n)_{ 0 \leq n \leq g}$.\\    
	We will focus on curves of defect 2 over the finite field $\F_2$ \textit{i.e.}
	$$ |N_1(\FF/\F_2)-(q+1)| = |N_1(\FF/\F_2)-3| = g[2\sqrt{q}]-2 = g[2\sqrt{2}]-2 = 2g-2,$$
	where $[x]$ denotes the largest integer $\leq x$. The general case was studied By J.P. Serre in \cite{serre}*{Theorem 2.5.1}.
	
	\begin{theorem} \label{SCase}
		For a curve over $\F_q$ such that $|N_1(\FF/\F_q)-(q+1)| = g[2\sqrt{q}]-2$, there are six possibilities for $(\alpha_1+\overline{\alpha_1}, \ldots, \alpha_g+\overline{\alpha_g})$ : 
		\begin{enumerate}[label={(\alph*)}]
			\item $\pm([2\sqrt{q}], \ldots,[2\sqrt{q}],[2\sqrt{q}]-2)$ for $g\geq 1$.
			\item $\pm([2\sqrt{q}],\ldots,[2\sqrt{q}],[2\sqrt{q}]+\sqrt{2}-1,[2\sqrt{q}]-\sqrt{2}-1)$ for $g=2$.
			\item $\pm([2\sqrt{q}],\ldots,[2\sqrt{q}],[2\sqrt{q}]-1,[2\sqrt{q}]-1)$ for $g=2$.
			\item $\pm([2\sqrt{q}],\ldots,[2\sqrt{q}],[2\sqrt{q}]+\sqrt{3}-1,[2\sqrt{q}]-\sqrt{3}-1)$ for $g \geq 2$.
			\item $\pm([2\sqrt{q}],\ldots,[2\sqrt{q}],[2\sqrt{q}]+\frac{-1+\sqrt{5}}{2},[2\sqrt{q}]+\frac{-1+\sqrt{5}}{2},[2\sqrt{q}]+\frac{-1-\sqrt{5}}{2},[2\sqrt{q}]+\frac{-1-\sqrt{5}}{2})$ for $g=4$.
			\item $\pm([2\sqrt{q}],\ldots,[2\sqrt{q}],[2\sqrt{q}]+1-4cos^2(\frac{\pi}{7}),[2\sqrt{q}]+1-4cos^2(\frac{2\pi}{7}),[2\sqrt{q}]+1-4cos^2(\frac{3\pi}{7}))$ for $g=3$.
			
		\end{enumerate}
	\end{theorem}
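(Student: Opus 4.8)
The plan is to turn this into a finite problem about totally real algebraic integers and then close it with the positivity $N_r \ge 0$. Write $m = [2\sqrt q]$ and $x_i = \alpha_i + \overline{\alpha_i} \in [-2\sqrt q, 2\sqrt q]$. Since $L(t) \in \Z[t]$, the multiset $\{x_1, \dots, x_g\}$ is stable under the Galois action and each $x_i$ is a totally real algebraic integer all of whose conjugates lie in $[-2\sqrt q, 2\sqrt q]$; by \eqref{NiFoncAl}, $\sum_i x_i = (q+1) - N_1$, so the hypothesis reads $\sum_i x_i = \pm(gm - 2)$. The map $x_i \mapsto -x_i$ (i.e.\ $\alpha_i \mapsto -\alpha_i$) preserves every constraint and swaps the two signs, so I would treat $\sum_i x_i = gm - 2$ and append the $\pm$. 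I then pass to the shifted integers $\gamma_i = m - x_i$, which are totally real algebraic integers with all conjugates in $[m - 2\sqrt q,\, m + 2\sqrt q]$ and with $\sum_i \gamma_i = 2$. Grouping the $\gamma_i$ into Galois orbits, where an orbit of degree $d_j$ occurs with multiplicity $c_j$ (so $\sum_j c_j d_j = g$), yields the single integer identity $\sum_j c_j T_j = 2$, with trace deficit $T_j = \mathrm{Tr}(\gamma_j) \in \Z$.

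The technical core is then two lemmas. The first is a nonnegativity statement: $T_j \ge 0$ for every orbit, with $T_j = 0$ only for the trivial orbit $\gamma_j = 0$ (equivalently $x_i = m$), which accounts for the repeated leading entries $[2\sqrt q]$. The interval bound alone is too weak here—it only gives $T_j \ge 1 - d_j$—so this is exactly the step where I would invoke more than the archimedean information: using $N_r = q^r + 1 - \sum_i(\alpha_i^r + \overline{\alpha_i}^r) \ge 0$ for all $r \ge 1$ (Weil's explicit-formula positivity, in the form exploited by Serre) to exclude orbits whose conjugates accumulate near $2\sqrt q$ strongly enough to force $\mathrm{Tr}(\gamma_j) < 0$, and simultaneously to bound the admissible degrees $d_j$. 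The second is a finiteness statement: because all conjugates lie in the fixed interval $[m - 2\sqrt q, m + 2\sqrt q]$, only finitely many orbits have $T_j \in \{1,2\}$, and a direct search over candidate minimal polynomials produces a short, $q$-independent list—the orbits of $\gamma = 2$, of $\gamma = 1$, of $X^2 - 2X - 1$, of $X^2 - 2X - 2$, of $X^2 - X - 1$, and the degree-$3$ orbit $\{1 + 2\cos(2k\pi/7)\}_{k=1,2,3}$—each admissible precisely when its conjugates remain within $[m - 2\sqrt q, m + 2\sqrt q]$.

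Finally, with $T_j \ge 0$ the relation $\sum_j c_j T_j = 2$ forces the nontrivial part to have one of three shapes: a single orbit with $(c_j, T_j) = (1,2)$, a single orbit with $(c_j, T_j) = (2,1)$, or two distinct orbits each with $(c_j, T_j) = (1,1)$. Matching the orbits of the finiteness lemma against their allowed multiplicities and the genus balance $g = \sum_j c_j d_j$, and filling the remaining coordinates with the trivial value $[2\sqrt q]$, reproduces the deficit-$2$ orbits ($\gamma = 2$, $X^2 - 2X - 1$, $X^2 - 2X - 2$, and the $7$-gon cubic) as cases (a), (b), (d), (f), and the deficit-$1$ orbits taken with multiplicity two ($\gamma = 1$ and $X^2 - X - 1$) as cases (c) and (e). The one remaining task—and the point I expect to be hardest—is to rule out the combinations that satisfy the trace identity but do not arise from an actual $L$-polynomial, most notably the two-orbit shape pairing the deficit-$1$ orbits $\gamma = 1$ and $X^2 - X - 1$; I would eliminate these by returning to $N_r \ge 0$ for a few small $r$ and checking the corresponding power-sum inequalities. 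As with the nonnegativity lemma, this pruning needs the full Weil positivity rather than the elementary bound $|x_i| \le 2\sqrt q$, and these two positivity arguments are the substance of the proof, the enumeration and bookkeeping being routine.
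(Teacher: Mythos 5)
First, a point of orientation: the paper itself does not prove Theorem \ref{SCase} at all --- it is quoted from Serre (the reference given is Theorem 2.5.1 of \cite{serre}), so your attempt can only be compared with Serre's own argument. Your skeleton does match Serre's strategy in outline: shift the $x_i=\alpha_i+\overline{\alpha_i}$ by the integer $[2\sqrt{q}]$ to get Galois-stable algebraic integers with total trace deficit $2$, classify the orbits of deficit $\leq 2$ (your list --- $\gamma=1$, $\gamma=2$, $X^2-X-1$, $X^2-2X-1$, $X^2-2X-2$, and the cubic $\{1+2\cos(2k\pi/7)\}$ --- is exactly the right one), and then decompose $2=\sum_j c_jT_j$. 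One repair you should make: your nonnegativity lemma does not need Weil positivity, and your plan to extract it from $N_r\geq 0$ is confused, since $N_r\geq 0$ constrains the whole eigenvalue system and cannot see an individual Galois orbit. The fix is simply to shift by $[2\sqrt{q}]+1$ instead of $[2\sqrt{q}]$: then $y_i=[2\sqrt{q}]+1-x_i$ is totally positive, its norm is a positive rational integer hence $\geq 1$, and AM--GM gives $\mathrm{Tr}(y)\geq \deg(y)$ with equality iff $y=1$. This is Serre's normalization ($\sum_i y_i=g+2$), and it makes that step elementary and purely archimedean.

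The genuine gap is your final step. The genus restrictions in cases (b), (c), (e), (f) --- equivalently, the assertion that those orbits cannot coexist with any trivial entry $[2\sqrt{q}]$ --- and the exclusion of the mixed pair ($\gamma=1$ together with $X^2-X-1$) are \emph{not} consequences of $N_r\geq 0$, nor even of the stronger conditions $B_r\geq 0$ (nonnegativity of the number of places of each degree). Concretely, take $q=2$, $g=3$ and the tuple $(-2,\,-1-\sqrt{2},\,-1+\sqrt{2})$, i.e.\ case (b) padded with one trivial entry, with the minus sign. One computes $N_1=N_2=N_3=N_5=7$, $N_4=23$, $N_6=91$, hence $B_2=B_3=B_5=0$, $B_4=4$, $B_6=14$, and for $r\geq 6$ one has $N_r\geq 2^r+1-6\cdot 2^{r/2}>0$; every positivity and integrality test passes, yet the theorem excludes this configuration. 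What actually separates the excluded cases from the allowed ones is an arithmetic, not an archimedean, invariant: writing $P$ for the minimal polynomial of $y$, the cases (b), (c), (e), (f) have $P(1)=\pm 1$, while (a) and (d) have $P(1)=-2$. Serre's proof exploits this through the gluing (resultant) argument: if the characteristic polynomial of Frobenius factors into coprime parts whose resultant is $\pm 1$, then the Jacobian is isomorphic \emph{as a principally polarized abelian variety} to a product, contradicting the irreducibility of the theta divisor of a Jacobian. No amount of power-sum positivity can substitute for this, as positivity is essentially insensitive to replacing $\sqrt{2}$ by $\sqrt{3}$, whereas the theorem treats $X^2-2X-1$ and $X^2-2X-2$ completely differently. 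This missing idea is the substance of Serre's proof, so the proposal as written cannot be completed along the lines you indicate.
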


	We are interested here by the case (a) of Theorem \ref{SCase} for $q=2$. This case offers possibly infinitely many examples of curves.
	
	Let $q=2$, we can easily deduce that under the conditions of Theorem \ref{SCase} case (a), we have $(\alpha_1+\overline{\alpha_1}, \ldots, \alpha_g+\overline{\alpha_g})=\pm([2\sqrt{q}], \ldots,[2\sqrt{q}],[2\sqrt{q}]-2)=\pm(2,\ldots,2,0)$. Thus, with equation \eqref{Mobius} notations we have $\phi_i = 1/4$ or $3/4$ since $2\sqrt{2}cos(\pi \cdot \phi_i)=\pm 2$ for $i \in \{1, \ldots, g-1 \}$ and $\phi_g=1/2$ since $2\sqrt{2}cos(\pi \cdot \phi_g)=0$.
	
	\begin{proposition} \label{ForC1}
		Let $L(t) = \sum_{n=0}^{2g} a_n t^n$ be the $L$-polynomial of $\FF/\F_2$, under the conditions of case (a) we have
		$$ a_n= a_{n,\pi/4} = \sum_{(m_1,\ldots,m_r)\in \P(n)} (-1)^r \cdot 2^r \cdot 2^{n/2} \prod_{s=1}^{r}  \frac{(g-1)cos(m_s \cdot \pi/4)+cos(m_s \cdot \pi/2)}{m_1+\cdots+m_s}$$
		or
		$$ a_n= a_{n,3\pi/4} = \sum_{(m_1,\ldots,m_r)\in \P(n)} (-1)^r \cdot 2^r \cdot 2^{n/2} \prod_{s=1}^{r}  \frac{(g-1)cos(m_s \cdot 3 \pi/4)+cos(m_s \cdot \pi/2)}{m_1+\cdots+m_s}$$
		for $n=1, \ldots, g$ and $a_{2g-n}=q^{g-n}a_n$ with $a_0=1$.
	\end{proposition}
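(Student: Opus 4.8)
The plan is to specialise the general formula in Proposition \ref{CoefRr} to the numerical data forced by case (a) of Theorem \ref{SCase}. Recall that Proposition \ref{CoefRr} already gives, for $n=1,\ldots,g$,
\[
a_n = \sum_{(m_1,\ldots,m_r)\in \P(n)} \prod_{s=1}^{r} (-1)^r \frac{\sum_{j=1}^{g}2q^{m_s/2}\cos(\pi m_s \phi_j)}{m_1+\cdots+m_s},
\]
so the entire task reduces to evaluating the inner sum $\sum_{j=1}^{g}2q^{m_s/2}\cos(\pi m_s \phi_j)$ under the specific angle data of case (a) with $q=2$. The relation $a_{2g-n}=q^{g-n}a_n$ and $a_0=1$ are already part of Proposition \ref{CoefRr}, so they transfer verbatim and need no separate argument.

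First I would record the angles: as noted just before the statement, case (a) with $q=2$ forces $\phi_j=1/4$ (or the conjugate choice $\phi_j=3/4$) for $j=1,\ldots,g-1$, and $\phi_g=1/2$. I would substitute these into the inner sum. With $q=2$ the prefactor $2q^{m_s/2}=2\cdot 2^{m_s/2}$, and summing over the $g-1$ indices with angle $\pi/4$ contributes $(g-1)\cos(m_s\cdot \pi/4)$, while the single index with $\phi_g=1/2$ contributes $\cos(m_s\cdot\pi/2)$. Hence
\[
\sum_{j=1}^{g}2q^{m_s/2}\cos(\pi m_s \phi_j) = 2\cdot 2^{m_s/2}\bigl[(g-1)\cos(m_s\cdot \pi/4)+\cos(m_s\cdot \pi/2)\bigr].
\]
The $3\pi/4$ variant is identical with $\cos(m_s\cdot\pi/4)$ replaced by $\cos(m_s\cdot 3\pi/4)$, since $\phi_j=3/4$ gives $\cos(\pi m_s\cdot 3/4)$. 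This is just a direct substitution, not an obstacle.

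Next I would collect the scalar factors across a composition $(m_1,\ldots,m_r)\in\P(n)$. The product $\prod_{s=1}^r$ of the numerators produces one factor $(-1)^r$ from the sign (note the $(-1)^r$ in Proposition \ref{CoefRr} is constant in $s$, so it appears once per composition, matching the $(-1)^r$ in the claim), one factor $2$ from each of the $r$ terms giving $2^r$, and one factor $2^{m_s/2}$ from each term; since $\sum_{s=1}^r m_s = n$, these telescope to $2^{(m_1+\cdots+m_r)/2}=2^{n/2}$. Pulling these constants out of the product leaves exactly
\[
\prod_{s=1}^{r}\frac{(g-1)\cos(m_s\cdot \pi/4)+\cos(m_s\cdot \pi/2)}{m_1+\cdots+m_s},
\]
which reproduces the claimed closed form $a_{n,\pi/4}$, and the parallel manipulation yields $a_{n,3\pi/4}$.

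The only point requiring a word of care, and the step I would flag as the main (minor) obstacle, is the bookkeeping of the $2^{n/2}$ factor: one must verify that exactly one factor of $2^{m_s/2}$ is attached to each part $m_s$ of the composition so that the exponents genuinely sum to $n$, and that the single power $2^r$ (rather than a per-part power) correctly absorbs the remaining constant $2$ from each term. Once that accounting is checked against the definition of a composition $\sum_s m_s=n$, the result follows immediately by substitution into Proposition \ref{CoefRr}, with no further analytic input needed.
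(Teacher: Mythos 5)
Your proposal is correct and follows essentially the same route as the paper's own proof: substitute the angle data $\phi_j=1/4$ (resp.\ $3/4$) for $j=1,\ldots,g-1$ and $\phi_g=1/2$ into Proposition \ref{CoefRr}, split off the $\pi/2$ term, and factor out $2^r$ and $2^{n/2}$ using $\sum_s m_s=n$. Your extra remark on the $(-1)^r$ bookkeeping is harmless, since placing it inside or outside the product changes nothing ($(-1)^{r^2}=(-1)^r$).
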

	
	\begin{proof}
		By Proposition \ref{CoefRr} and the conditions of case (a), we have
		\begin{eqnarray*}
			a_{n,\pi/4} & = &\sum_{(m_1,\ldots,m_r)\in \P(n)} \prod_{s=1}^{r} (-1)^r 
			\frac{\sum_{j=1}^{g}2q^{m_s/2}cos(\pi m_s \phi_j)}{m_1+\cdots+m_s}  \\
			& = & \sum_{(m_1,\ldots,m_r)\in \P(n)} \prod_{s=1}^{r} (-1)^r \frac{(\sum_{j=1}^{g-1}2q^{m_s/2}cos(m_s \cdot \pi/4))+2q^{m_s/2}cos(m_s \cdot \pi/2)}{m_1+\cdots+m_s}  \\
			& = &\sum_{(m_1,\ldots,m_r)\in \P(n)} (-1)^r \cdot 2^r \cdot 2^{n/2} \prod_{s=1}^{r} \frac{(g-1)cos(m_s \cdot \pi/4)+cos(m_s \cdot \pi/2)}{m_1+\cdots+m_s} \\
		\end{eqnarray*}
		The second equality can be proven similarly. 
	\end{proof}
	
	For what comes next, we need the following notations:\\
	$E_n:=\{n+8k, k \in \N^*\}$.\\
	$C_{\pi/4}(n) := (g-1)cos(n \cdot \pi/4)+cos(n \cdot \pi/2)$. \\
	$C_{3\pi/4}(n) := (g-1)cos(n \cdot 3\pi/4)+cos(n \cdot \pi/2)$.\\
	$\P_{\pi/4}^+(n) := \{(m_1, \ldots, m_r) \in \P(n) | (-1)^r \cdot 2^r \cdot 2^{n/2} \prod_{s=1}^{r} \frac{C_{\pi/4}(m_s)}{m_1+\cdots+m_s} > 0 \}$. \\
	$\P_{\pi/4}^-(n) := \{(m_1, \ldots, m_r) \in \P(n) | (-1)^r. 2^r. 2^{n/2} \prod_{s=1}^{r} \frac{C_{\pi/4}(m_s)}{m_1+\cdots+m_s} < 0 \}$. \\
	$\P_{3\pi/4}^+(n) := \{(m_1, \ldots, m_r) \in \P(n) | (-1)^r \cdot 2^r \cdot 2^{n/2} \prod_{s=1}^{r} \frac{C_{3\pi/4}(m_s)}{m_1+\cdots+m_s} > 0 \}$. \\
	$\P_{3\pi/4}^-(n) := \{(m_1, \ldots, m_r) \in \P(n) | (-1)^r \cdot 2^r \cdot 2^{n/2} \prod_{s=1}^{r} \frac{C_{3\pi/4}(m_s)}{m_1+\cdots+m_s} < 0 \}$. \\
	$P_{\pi/4}^+(n) := \# \P_{\pi/4}^+(n)$, $P_{\pi/4}^-(n) = \# \P_{\pi/4}^-(n)$, $P_{3\pi/4}^+(n) = \# \P_{3\pi/4}^+(n)$, \linebreak[4] $P_{3\pi/4}^-(n) = \# \P_{3\pi/4}^-(n)$. \\
	$CR_{\pi/4}: \P(n) \rightarrow \Q$ the map defined by: $$CR_{\pi/4}((m_1,\ldots, m_r)) = (-1)^r \cdot 2^r \cdot 2^{n/2} \prod_{s=1}^{r} \frac{C_{\pi/4}(m_s)}{m_1+\cdots+m_s}. $$ 
	$CR_{3\pi/4}: \P(n) \rightarrow \Q$ the map defined by: $$CR_{3\pi/4}((m_1,\ldots, m_r)) = (-1)^r \cdot 2^r \cdot 2^{n/2} \prod_{s=1}^{r} \frac{C_{3\pi/4}(m_s)}{m_1+\cdots+m_s}. $$ 
	
	\begin{proposition} \label{RelRec2}
		Let $L(t) = \sum_{n=0}^{2g} a_n t^n$ be the $L$-polynomial of $\FF/\F_2$.  Under the conditions of case (a) we have
		$$a_n= \sum\limits_{\substack{(m_1,\ldots,m_r)\in \P(n) \\ m_r \neq i }} (-1)^r \cdot 2^r \cdot 2^{n/2} \prod_{s=1}^{r}  \frac{C_{\theta}(m_s)}{m_1+\cdots+m_s} - 2^{\frac{i+2}{2}} \frac{C_{\theta}(i)}{n} a_{n-i}$$
		and
		$$a_n = \sum_{i=1}^{n} -2^{\frac{i+2}{2}} \cdot \frac{C_{\theta}(i)}{n} a_{n-i} $$
		for $n=1, \ldots, g$ and $a_{2g-n}=q^{g-n}a_n$ with $a_0=1$ and $\theta = \pi/4$ or $3\pi/4$.
		
	\end{proposition}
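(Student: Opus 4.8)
The plan is to derive both identities directly from the closed form of Proposition \ref{ForC1}, namely $a_n = \sum_{(m_1,\ldots,m_r)\in\P(n)} CR_\theta(m_1,\ldots,m_r)$ with $\theta\in\{\pi/4,3\pi/4\}$, by grouping the compositions of $n$ according to the value of their last part $m_r$. Here it is convenient to remember that $CR_\theta$ evaluated on a composition of total $m$ carries the prefactor $(-1)^r 2^r 2^{m/2}$, so that the hidden power of two changes with the total being partitioned.

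For the first identity I would fix $i\in\{1,\ldots,n\}$ and split $\P(n)$ into the compositions with $m_r\neq i$ and those with $m_r=i$. The sum over $m_r\neq i$ is exactly the first term of the statement, so everything reduces to evaluating the sum over compositions ending in $i$. Each such composition is $(m_1,\ldots,m_{r-1},i)$ with $(m_1,\ldots,m_{r-1})\in\P(n-i)$, and deleting the last part is a bijection from $\{(m_1,\ldots,m_r)\in\P(n):m_r=i\}$ onto $\P(n-i)$, the empty composition of $0$ corresponding to the singleton $(n)$ when $i=n$.

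The key computation is the ratio between the weight of $(m_1,\ldots,m_{r-1},i)$ as a composition of $n$ and that of $(m_1,\ldots,m_{r-1})$ as a composition of $n-i$. The last derived element contributes $\frac{C_\theta(i)}{m_1+\cdots+m_{r-1}+i}=\frac{C_\theta(i)}{n}$, and passing from $r-1$ parts summing to $n-i$ to $r$ parts summing to $n$ turns the prefactor $(-1)^{r-1}2^{r-1}2^{(n-i)/2}$ into $(-1)^r 2^r 2^{n/2}$, whence
$$CR_\theta(m_1,\ldots,m_{r-1},i) = -2^{\frac{i+2}{2}}\frac{C_\theta(i)}{n}\,CR_\theta(m_1,\ldots,m_{r-1}).$$
Summing over $\P(n-i)$ and applying Proposition \ref{ForC1} once more (with the convention $a_0=1$ to cover $i=n$) collapses this to $-2^{\frac{i+2}{2}}\frac{C_\theta(i)}{n}a_{n-i}$, giving the first identity.

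For the second identity I would give two consistent routes. Summing the grouping above over all last-part values $i=1,\ldots,n$ yields $a_n=\sum_{i=1}^n\big(-2^{\frac{i+2}{2}}\frac{C_\theta(i)}{n}a_{n-i}\big)$ at once. Alternatively it is the recurrence of Theorem \ref{AiFoncSi}(a) after substituting $S_i=-2^{\frac{i+2}{2}}C_\theta(i)$; indeed, under the case (a) hypotheses one checks $S_i=-\sum_{j=1}^{2g}\alpha_j^i=-2\cdot 2^{i/2}\big((g-1)\cos(i\pi/4)+\cos(i\pi/2)\big)=-2^{\frac{i+2}{2}}C_{\pi/4}(i)$, and likewise with $3\pi/4$. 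I expect the only delicate points to be bookkeeping: tracking the sign $(-1)^r$, both powers of two (the $2^r$ from the derived-element count and the $2^{m/2}$ hidden in $CR_\theta$), and the boundary term $i=n$, where the truncated composition is empty and must be read as $a_0=1$.
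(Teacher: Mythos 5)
Your proof is correct and follows essentially the same route as the paper: splitting $\P(n)$ by the value of the last part, factoring out $-2^{(i+2)/2}\frac{C_\theta(i)}{n}$ via the bijection with $\P(n-i)$, and recognizing the remaining sum as $a_{n-i}$ through Proposition \ref{ForC1}. Your alternative derivation of the second identity from Theorem \ref{AiFoncSi}(a) with $S_i=-2^{(i+2)/2}C_\theta(i)$ is also sound and is precisely what the paper records in the remark following its proof.
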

	
	\begin{proof}
		The main idea is to notice that 
		$$\P(n) = \bigcup_{i=1}^{n} \{(m_1,\ldots,m_r,i)\ \hbox{with}\ (m_1,\ldots,m_r) \in \P(n-i) \}$$
		then 
		
		\begin{eqnarray*}
			a_n & = & \sum_{(m_1,\ldots,m_r)\in \P(n)} (-1)^r \cdot 2^r \cdot 2^{n/2} \prod_{s=1}^{r}                      \frac{C_{\theta}(m_s)}{m_1+\cdots+m_s} \\
			& = & \sum\limits_{\substack{(m_1,\ldots,m_r)\in \P(n) \\ m_r \neq i }} (-1)^r \cdot 2^r \cdot 
			2^{n/2} \prod_{s=1}^{r}  \frac{C_{\theta}(m_s)}{m_1+\cdots+m_s} +  \\
			&   &\sum_{(m_1,\ldots,m_{r-1},i)\in \P(n)} (-1)^r \cdot 2^r \cdot 2^{n/2} \prod_{s=1}^{r}             \frac{C_{\theta}(m_s)}{m_1+\cdots+m_s}\\
			& = & \sum\limits_{\substack{(m_1,\ldots,m_r)\in \P(n) \\ m_r \neq i }} (-1)^r \cdot 2^r \cdot 
			2^{n/2} \prod_{s=1}^{r}  \frac{C_{\theta}(m_s)}{m_1+\cdots+m_s} + \\
			&   & -2 \cdot 2^{i/2}\frac{C_{\theta}(i)}{n} \cdot \sum_{(m_1,\ldots,m_{r-1},i)\in \P(n)} 
			(-1)^{r-1} \cdot 2^{r-1} \cdot 2^{(n-i)/2}\prod_{s=1}^{r-1}\frac{C_{\theta}(m_s)}{m_1+\cdots+m_s} \\ 
			& = & \sum\limits_{\substack{(m_1,\ldots,m_r)\in \P(n) \\ m_r \neq i }} (-1)^r \cdot 2^r \cdot 
			2^{n/2} \prod_{s=1}^{r}  \frac{C_{\theta}(m_s)}{m_1+\cdots+m_s} +\\
			&   & -2 \cdot 2^{i/2}\frac{C_{\theta}(i)}{n} \cdot \sum_{(m_1,\ldots,m_{r-1})\in \P(n-i)} 
			(-1)^{r-1} \cdot 2^{r-1} \cdot 2^{(n-i)/2}\prod_{s=1}^{r-1}\frac{C_{\theta}(m_s)}{m_1+\cdots+m_s} \\
			& = & \sum\limits_{\substack{(m_1,\ldots,m_r)\in \P(n) \\ m_r \neq i }} (-1)^r \cdot 2^r \cdot 
			2^{n/2} \prod_{s=1}^{r}  \frac{C_{\theta}(m_s)}{m_1+\cdots+m_s} - 2^{(i+2)/2} \frac{C_{\theta}(i)}{n} a_{n-i} 
		\end{eqnarray*}
		
		The second equality can be proven similarly. 
		
	\end{proof}
	
	\begin{remark}
		The second equality could also be proved applying Proposition \ref{RelRec}.   
	\end{remark}
	
	Notice that 
	$$ C_{\pi/4}(m_s) = \left\{\begin{array}{lllll}
		(g-1) \sqrt{2}/2 & \mbox{if}\ m_s \in E_1 \cup E_7 \\
		-(g-1) \sqrt{2}/2 & \mbox{if}\ m_s \in E_3 \cup E_5 \\
		-1 & \mbox{if}\ m_s \in E_2 \cup E_6 \\
		-(g-2) & \mbox{if}\ m_s \in E_4 \\
		g & \mbox{if}\ m_s \in E_8 \\
	\end{array}
	\right.  $$
	and
	$$ C_{3\pi/4}(m_s) = \left\{\begin{array}{lllll}
		(g-1) \sqrt{2}/2 & \mbox{if}\ m_s \in E_3 \cup E_5 \\
		-(g-1) \sqrt{2}/2 & \mbox{if}\ m_s \in E_1 \cup E_7 \\
		-1 & \mbox{if}\ m_s \in E_2 \cup E_6 \\
		-(g-2) & \mbox{if}\ m_s \in E_4 \\
		g & \mbox{if}\ m_s \in E_8 \\
	\end{array}
	\right.  $$
	it is clear (for $g > 2$) that $m \in \P_{\pi/4}^+(n) $, if and only if, the number of parts of $m$ that are in $E_2 \cup E_3 \cup E_4 \cup E_5 \cup E_6$ has the same parity as $r$ the number of all parts. $m \in \P_{\pi/4}^-(n) $, if and only if, this number and $r$ have not the same parity. \\
	Similarly, we can characterize $\P_{3\pi/4}^+(n) $ and $\P_{3\pi/4}^-(n) $ by replacing $E_3$ and $E_5$ with $E_1$ and $E_7$.\\
	
	The following proposition, is a second characterization which is less obvious.   
	
	\begin{proposition} \label{Car}
		Let $L(t) = \sum_{n=0}^{2g} a_n t^n$ be the $L$-polynomial of $\FF/\F_2$, \linebreak[4] $m=(m_1,\ldots, m_r) \in \P(n)$ a composition of an integer $n \leq g$ with $g > 2$. Under the conditions of case (a) we have
		\begin{enumerate}
			\item[\rm{i})] The cardinality of $(E_1 \cup E_7 \cup E_8) \cap \{m_1,\ldots,m_r \}$ is even, if and only if, $m \in \P_{\pi/4}^+(n) $. It is odd, if and only if, $m \in \P_{\pi/4}^-(n)$.
			\item[\rm{ii})] The cardinality of $(E_3 \cup E_5 \cup E_8) \cap \{m_1,\ldots,m_r \}$ is even, if and only if, $m \in \P_{3\pi/4}^+(n) $. It is odd, if and only if, $m \in \P_{3\pi/4}^-(n)$.
		\end{enumerate}
	\end{proposition}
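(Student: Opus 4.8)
The plan is to reduce the entire statement to a single sign computation for the map $CR_\theta$, and then to exploit the fact that every part of a composition falls into exactly one of the eight residue classes $E_1,\ldots,E_8$ modulo $8$. First I observe that in the definition of $CR_\theta(m)$ the prefactor $(-1)^r\cdot 2^r\cdot 2^{n/2}$ has the same sign as $(-1)^r$, and every denominator $m_1+\cdots+m_s$ is a positive integer; hence
$$\operatorname{sign}\bigl(CR_\theta(m)\bigr)=\operatorname{sign}\Bigl((-1)^r\prod_{s=1}^{r}C_\theta(m_s)\Bigr),$$
so the membership of $m$ in $\P_\theta^+(n)$ or $\P_\theta^-(n)$ depends only on the parity of $r$ together with the number of parts on which $C_\theta$ is negative.

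Next I would read the signs of $C_\theta$ off the two tables preceding the statement, using the hypothesis $g>2$ to guarantee that none of the listed values vanishes and that each has the stated sign. For $\theta=\pi/4$ the quantity $C_{\pi/4}$ is positive exactly on $E_1\cup E_7\cup E_8$ (values $(g-1)\sqrt{2}/2$ and $g$) and negative on $E_2\cup E_3\cup E_4\cup E_5\cup E_6$ (values $-1$, $-(g-1)\sqrt{2}/2$ and $-(g-2)$); symmetrically for $\theta=3\pi/4$, with $E_1,E_7$ interchanged with $E_3,E_5$. Writing $k_+$ for the number of parts $m_s$ (counted with multiplicity) lying in the positive classes and $k_-$ for the number lying in the negative classes, the product $\prod_{s=1}^{r}C_\theta(m_s)$ has sign $(-1)^{k_-}$, whence $\operatorname{sign}\bigl(CR_\theta(m)\bigr)=(-1)^{r+k_-}$.

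The crux is then the elementary identity $r=k_++k_-$: since $E_1,\ldots,E_8$ partition the positive integers, each of the $r$ parts is counted in exactly one of $k_+,k_-$. Consequently $r+k_-=k_++2k_-\equiv k_+\pmod 2$, so $\operatorname{sign}\bigl(CR_\theta(m)\bigr)=(-1)^{k_+}$. Thus $m\in\P_\theta^+(n)$ if and only if $k_+$ is even, and $m\in\P_\theta^-(n)$ if and only if $k_+$ is odd, which is precisely statement i) for $\theta=\pi/4$ (positive classes $E_1\cup E_7\cup E_8$) and statement ii) for $\theta=3\pi/4$ (positive classes $E_3\cup E_5\cup E_8$).

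I expect no serious obstacle; the only point demanding care is that the cardinality in the statement must be understood \emph{with multiplicity}, that is, as the number of indices $s$ with $m_s\in E_1\cup E_7\cup E_8$ (resp. $E_3\cup E_5\cup E_8$), rather than the size of the underlying set of distinct part-values. For instance the composition $(1,1)$ contributes two parts in $E_1$ and lies in $\P_{\pi/4}^+(2)$, whereas the set $\{1\}$ has odd cardinality. Once this reading is fixed, the whole argument is simply the translation, through $r=k_++k_-$, of the already-noted ``parity relative to $r$'' description of $\P_\theta^\pm(n)$ into a description involving the positive classes alone.
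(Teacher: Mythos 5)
Your proof is correct, and it takes a genuinely different --- and cleaner --- route than the paper's. The paper argues by induction on $n$: after checking $n=1,2$ by hand, it takes $m \in \P_{\pi/4}^+(n)$, picks a part $m_i < n$ lying in $E_1 \cup E_7 \cup E_8$, and through a manipulation of the telescoping denominators rewrites $CR_{\pi/4}(m)$ as a positive multiple of $-C_{\pi/4}(m_i)\cdot CR_{\pi/4}((m_1,\ldots,m_{i-1},m_{i+1},\ldots,m_r))$, so that the induction hypothesis applied to the shorter composition yields the parity claim; the $\P^-$ direction and statement ii) are declared ``similar.'' Your argument replaces all of this with the single observation that the denominators and the factor $2^r\cdot 2^{n/2}$ are positive, so $\mathrm{sign}(CR_\theta(m)) = (-1)^r\prod_{s}\mathrm{sign}(C_\theta(m_s)) = (-1)^{r+k_-} = (-1)^{k_+}$ via the identity $r = k_+ + k_-$. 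This buys you several things at once: no induction, no case analysis, both directions of both equivalences handled uniformly (the paper explicitly treats only the implication starting from $m\in\P_{\pi/4}^+(n)$, with the converse left to the reader via the partition of $\P(n)$ into $\P^+$ and $\P^-$), and no unexamined corner case --- the paper's case split (empty intersection, or some $m_i < n$ in the intersection) silently omits the single-part composition $m=(n)$ with $n\in E_1\cup E_7\cup E_8$, which is vacuous for $\P^+$ but should be said. Your insistence that the cardinality be read \emph{with multiplicity} (as the number of indices $s$, not the size of the set of distinct part-values) is also well taken: it is forced by the paper's own base case $(1,1)\in\P_{\pi/4}^+(2)$, whose underlying set $\{1\}$ meets $E_1$ in exactly one element.
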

	
	\begin{proof}
		\begin{enumerate}
			\item[\rm{i})] By induction, we have for $n=1$: $\P(1) = \{(1) \}$ then $CR_{\pi/4}((1)) = -1 \cdot 2 \cdot 2^{1/2} \cdot \frac{(g-1)\sqrt{2}/2}{1}=-2(g-1) < 0$. \\
			For $n=2$: $\P(2) = \{(1,1),(2) \}$ then 
			$$CR_{\pi/4}((1,1)) = (-1)^2 \cdot 2^2 \cdot 2^{2/2} \cdot \frac{(g-1)\sqrt{2}/2}{1} \cdot \frac{(g-1)\sqrt{2}/2}{2}=2(g-1)^2 > 0,$$
			and
			$$CR_{\pi/4}((2)) = -1 \cdot 2 \cdot 2^{2/2} \cdot \frac{-1}{2}=2 > 0.$$
			Suppose that the equivalence is true for $n' < n$. Let \linebreak[4] $m=(m_1,\ldots,m_r) \in \P_{\pi/4}^+(n)$, then:
			\begin{itemize}
				\item If $(E_1 \cup E_7 \cup E_8) \cap \{m_1,\ldots,m_r \} = \emptyset$, the cardinality of this set is even.
				\item If $m_i < n$ such that $\{m_i \} \subset (E_1 \cup E_7 \cup E_8) \cap \{m_1,\ldots,m_r \} $, we have
				\begin{eqnarray*}
					CR_{\pi/4}(m) & = & (-1)^r \cdot 2^r \cdot 2^{n/2} \prod_{s=1}^{r} \frac{C_{\pi/4}(m_s)}          
					{m_1+\cdots+m_s} \\
					& = & (-1)^r \cdot 2^r \cdot 2^{n/2} \prod_{s=1}^{i-1} \frac{C_{\pi/4}(m_s)}          
					{m_1+\cdots+m_s} \cdot \frac{C_{\pi/4}(m_i)}{m_1+\cdots+m_i}. \\
					&   &\prod_{s=i+1}^{r} \frac{C_{\pi/4}(m_s)}{m_1+\cdots+m_i+\cdots+m_s}\\
					& = & (-1)^r \cdot 2^r \cdot 2^{n/2} \prod_{s=1}^{i-1} \frac{C_{\pi/4}(m_s)}          
					{m_1+\cdots+m_s} \cdot \frac{C_{\pi/4}(m_i)}{m_1+\cdots+m_i}. \\
					&   & \prod_{s=i+1}^{r} \frac{C_{\pi/4}(m_s)}{m_1+\cdots+m_{i- 
							1}+m_{i+1}+\cdots+m_s}. \\ 
					&   &\prod_{s=i+1}^{r} \frac{m_1+\cdots+m_{i-1}+m_{i+1}+\cdots+m_s} 
					{m_1+\cdots+m_{i-1}+m_{i}+m_{i+1}+\cdots+m_s} \\
					& = & -2.2^{m_i/2} C_{\pi/4}(m_i).\prod_{s=i+1}^{r} \frac{m_1+\cdots+m_{i- 
							1}+m_{i+1}+\cdots+m_s}{m_1+\cdots+m_{i-1}+m_{i}+m_{i+1}+\cdots+m_s}. \\
					&   &(-1)^{r-1} \cdot 2^{r-1} \cdot 2^{(n-m_i)/2} \prod_{s=1}^{r-1} \frac{C_{\pi/4} 
						(m_s)}{m_1+\cdots+m_s} \\
					& = & -2 \cdot 2^{m_i/2} C_{\pi/4}(m_i).\prod_{s=i+1}^{r} \frac{m_1+\cdots+m_{i- 
							1}+m_{i+1}+\cdots+m_s}{m_1+\cdots+m_{i}+\cdots+m_s}.\\
					&   & CR_{\pi/4}((m_1,\cdots,m_{i-1},m_{i+1},\cdots,m_r))     
				\end{eqnarray*} 
				Since $C_{\pi/4}(m_i) > 0$ and $CR_{\pi/4}(m) > 0$, we have $$CR_{\pi/4}((m_1,\ldots,m_{i-1},m_{i+1},\ldots,m_r)) < 0.$$ 
				By induction hypothesis, the cardinality of $$(E_1 \cup E_7 \cup E_8) \cap \{ m_1,\ldots,m_{i-1},m_{i+1},\ldots,m_r\}$$ is odd, thus the cardinality of $(E_1 \cup E_7 \cup E_8) \cap \{m_1,\ldots,m_r \}$ is even. 
				
			\end{itemize}
			
			\item[\rm{ii})] The second equivalence can be proven similarly by replacing $E_3$, $E_5$ with $E_1$, $E_7$.
			
		\end{enumerate}
	\end{proof}

	\begin{proposition} \label{Sym}
		Let $L(t) = \sum_{n=0}^{2g} a_n t^n$ be the $L$-polynomial of $\FF/\F_2$. Under the conditions of case (a) and with Proposition \ref{ForC1} notations, we have 
		$$ a_{n,\pi/4} = \left\{\begin{array}{ll}
			a_{n,3\pi/4}\ \hbox{if}\ n\ \hbox{is even} \\
			- a_{n,3\pi/4}\ \hbox{if}\ n\ \hbox{is odd}\\
		\end{array}
		\right.  $$
		for $n=1, \ldots, g$ and $a_{2g-n}=q^{g-n}a_n$ with $a_0=1$.
	\end{proposition}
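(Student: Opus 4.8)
The plan is to compare the two formulas for $a_{n,\pi/4}$ and $a_{n,3\pi/4}$ given in Proposition \ref{ForC1} term by term. Both are sums over the \emph{same} index set $\P(n)$, and for a fixed composition $(m_1,\ldots,m_r)$ the two summands differ only through the factors $C_{\pi/4}(m_s)$ versus $C_{3\pi/4}(m_s)$, all other quantities ($(-1)^r$, $2^r$, $2^{n/2}$, and the denominators $m_1+\cdots+m_s$) being identical. Everything therefore reduces to the single pointwise identity
\begin{equation*}
	C_{3\pi/4}(m) = (-1)^{m}\, C_{\pi/4}(m), \qquad m \in \N .
\end{equation*}

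To prove this identity I would use $3\pi/4 = \pi - \pi/4$ and the addition formula to write $\cos(3m\pi/4) = \cos(m\pi)\cos(m\pi/4) + \sin(m\pi)\sin(m\pi/4) = (-1)^{m}\cos(m\pi/4)$, since $\sin(m\pi)=0$ for every integer $m$. For the remaining term I would observe that $\cos(m\pi/2)=0$ whenever $m$ is odd, so that $\cos(m\pi/2) = (-1)^{m}\cos(m\pi/2)$ holds for both parities of $m$. Substituting these into $C_{3\pi/4}(m) = (g-1)\cos(3m\pi/4) + \cos(m\pi/2)$ and factoring out $(-1)^m$ gives the identity. Equivalently, one can read it off the two case tables for $C_{\pi/4}$ and $C_{3\pi/4}$ displayed just before Proposition \ref{Car}, verifying it on each residue class modulo $8$.

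Granting the identity, I would substitute $C_{3\pi/4}(m_s) = (-1)^{m_s}C_{\pi/4}(m_s)$ into the expression of Proposition \ref{ForC1} for $a_{n,3\pi/4}$. For each composition the accumulated sign is $\prod_{s=1}^{r}(-1)^{m_s} = (-1)^{m_1+\cdots+m_r} = (-1)^{n}$, using that $m_1+\cdots+m_r=n$ for every element of $\P(n)$. Since this factor does not depend on the composition, it pulls out of the whole sum and what remains is exactly the formula for $a_{n,\pi/4}$. Hence $a_{n,3\pi/4} = (-1)^{n}a_{n,\pi/4}$, which is the asserted equality $a_{n,\pi/4}=a_{n,3\pi/4}$ for even $n$ and $a_{n,\pi/4}=-a_{n,3\pi/4}$ for odd $n$; the relation $a_{2g-n}=q^{g-n}a_n$ is the functional equation already carried by each $a_{n,\theta}$ and needs no separate argument.

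I expect no real obstacle in this argument: the only point requiring care is the trigonometric identity, and specifically the fact that the $\cos(m\pi/2)$ summand transforms with the \emph{same} factor $(-1)^m$ as the $\cos(m\pi/4)$ summand, which works precisely because it vanishes in the odd case. Once that is in place, the proposition follows from a one-line extraction of a global sign from the composition sum.
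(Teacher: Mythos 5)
Your proof is correct, and it takes a genuinely different route from the paper. The paper deduces the relation $CR_{\pi/4}(m)=\pm CR_{3\pi/4}(m)$ from the sign characterization of Proposition \ref{Car}: it observes that the parts lying in $E_1\cup E_3\cup E_5\cup E_7$ are exactly the odd parts, so the parity of $n$ controls whether $\#\bigl[(E_1\cup E_7\cup E_8)\cap\{m_1,\ldots,m_r\}\bigr]$ and $\#\bigl[(E_3\cup E_5\cup E_8)\cap\{m_1,\ldots,m_r\}\bigr]$ agree or differ in parity, and then invokes Proposition \ref{Car} together with the (implicit) fact that $|C_{\pi/4}(m_s)|=|C_{3\pi/4}(m_s)|$ termwise. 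Your argument instead proves the pointwise identity $C_{3\pi/4}(m)=(-1)^m C_{\pi/4}(m)$ directly from $\cos(3m\pi/4)=(-1)^m\cos(m\pi/4)$ and the vanishing of $\cos(m\pi/2)$ for odd $m$, then extracts the global factor $(-1)^{m_1+\cdots+m_r}=(-1)^n$ from the sum over $\P(n)$. Your route is more elementary and self-contained: it needs no induction, it does not inherit the hypothesis $g>2$ under which Proposition \ref{Car} is stated (so it covers the small-genus cases the paper's proof technically leaves aside), and it sidesteps the set-versus-multiset ambiguity in the paper's intersections $E_i\cap\{m_1,\ldots,m_r\}$ when parts repeat. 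What the paper's approach buys is integration with its sign-counting machinery (the $E_i$ classes and Proposition \ref{Car}), which it reuses immediately afterwards in Proposition \ref{Ordre} and Theorem \ref{SignAi}; your identity, however, yields the paper's intermediate claim $CR_{\pi/4}(m)=(-1)^n CR_{3\pi/4}(m)$ as an exact termwise equality rather than a sign statement, which is strictly stronger.
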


	\begin{proof}
		Let $m=(m_1,\ldots, m_r) \in \P(n)$ a composition of an integer $n \leq g$. The main idea here, is to notice that if $n$ is even then the cardinality of
		$$(E_3 \cup E_5 \cup E_8) \cap \{m_1,\ldots,m_r \}$$
		and 
		$$(E_1 \cup E_7 \cup E_8) \cap \{m_1,\ldots,m_r \}$$
		are both even or both odd (1, 3, 5, 7 are the only odd numbers modulo 8). If $n$ is odd, then one is even and the second is odd. Thus, by Proposition \ref{Car} 
		
		$$ CR_{\pi/4}(m) = \left\{\begin{array}{ll}
			CR_{3\pi/4}(m)\ \hbox{if}\ n\ \hbox{is even} \\
			- CR_{3\pi/4}(m)\ \hbox{if}\ n\ \hbox{is odd}\\
		\end{array}
		\right.  $$
		we conclude by noticing that
		$$a_{n,\pi/4} = \sum_{m \in \P(n)} CR_{\pi/4}(m)\ \hbox{and}\ a_{n,3\pi/4} = \sum_{m \in \P(n)} CR_{3\pi/4}(m)$$
		
	\end{proof}

	\begin{proposition} \label{Ordre}
		Let $n \leq g$ be an integer with $g > 2$. then $P_{\pi/4}^+(n) > P_{\pi/4}^-(n)$ if $n$ is even, $P_{\pi/4}^+(n) < P_{\pi/4}^-(n)$ if $n$ is odd and $P_{3\pi/4}^+(n) > P_{3\pi/4}^-(n)$ for all $n$. More precisely
		
		\begin{enumerate}[label={(\alph*)}]
			\item $P_{\pi/4}^+(2) - P_{\pi/4}^-(2) = 2$.
			\item $P_{\pi/4}^-(3) - P_{\pi/4}^+(3) = 2$.
			\item $P_{\pi/4}^+(4) - P_{\pi/4}^-(4) = 4$.
			\item $P_{\pi/4}^-(5) - P_{\pi/4}^+(5) = 4$.
			\item $P_{\pi/4}^+(n) - P_{\pi/4}^-(n) > n$ for $n \geq 6$ if $n$ is even.
			\item $P_{\pi/4}^-(n) - P_{\pi/4}^+(n) > n$ for $n \geq 6$ if $n$ is odd.
			\item $P_{\pi/4}^\delta(n):=|P_{\pi/4}^+(n) - P_{\pi/4}^-(n)|$ is a strictly increasing sequence for $n \geq 6$.
			\item $P_{3\pi/4}^+(n) - P_{3\pi/4}^-(n) = 2$ for $n = 2$ and $3$.
			\item $P_{3\pi/4}^+(n) - P_{3\pi/4}^-(n) = 4$ for $n = 4$ and $5$.
			\item $P_{3\pi/4}^+(n) - P_{3\pi/4}^-(n) > n$ and $P_{3\pi/4}^\delta(n):=P_{3\pi/4}^+(n) - P_{3\pi/4}^-(n)$ is a strictly increasing sequence for $n \geq 6$.
		\end{enumerate}
		
	\end{proposition}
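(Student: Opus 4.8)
The plan is to reduce the whole statement to a single integer sequence governed by a short linear recurrence. For $\theta\in\{\pi/4,3\pi/4\}$ set the signed count $D_\theta(n):=P_\theta^+(n)-P_\theta^-(n)$. By Proposition \ref{Car} (using $g>2$), the sign of $CR_\theta$ on a composition $m=(m_1,\dots,m_r)\in\P(n)$ is $(-1)^{c_\theta(m)}$, where $c_{\pi/4}(m)$ counts the parts in $E_1\cup E_7\cup E_8$ and $c_{3\pi/4}(m)$ the parts in $E_3\cup E_5\cup E_8$, i.e. the parts $j$ with $j\equiv 0,1,7\pmod 8$ respectively $j\equiv 0,3,5\pmod 8$. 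Hence
$$D_\theta(n)=\sum_{m\in\P(n)}(-1)^{c_\theta(m)}=\sum_{m\in\P(n)}\prod_{s=1}^r\sigma_\theta(m_s),$$
where $\sigma_{\pi/4}(j)=-1$ exactly when $j\equiv 0,1,7\pmod 8$, $\sigma_{3\pi/4}(j)=-1$ exactly when $j\equiv 0,3,5\pmod 8$, and $\sigma_\theta(j)=+1$ otherwise. Splitting each composition off its first part gives the convolution recurrence $D_\theta(n)=\sum_{j=1}^n\sigma_\theta(j)\,D_\theta(n-j)$ with $D_\theta(0)=1$.

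I would then exploit the $\pi/4\leftrightarrow 3\pi/4$ symmetry already present in Proposition \ref{Sym}. Put $d_n:=(-1)^nD_{\pi/4}(n)$. Multiplying the $\pi/4$-recurrence by $(-1)^n$ yields $d_n=\sum_{j=1}^n(-1)^j\sigma_{\pi/4}(j)\,d_{n-j}$, and a residue check mod $8$ shows $(-1)^j\sigma_{\pi/4}(j)=\sigma_{3\pi/4}(j)$ for every $j$. Thus $d_n$ and $D_{3\pi/4}(n)$ obey the same recurrence with the same initial value, so $D_{3\pi/4}(n)=d_n=|D_{\pi/4}(n)|$ for all $n$; in particular $d_n>0$, which recovers both the sign alternation of $D_{\pi/4}$ and the positivity of $D_{3\pi/4}$. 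This one identity handles the two columns of the statement simultaneously: each $\pi/4$-claim becomes, up to the factor $(-1)^n$, a claim about $d_n$, and the $3\pi/4$-claims are the same statements with that factor removed.

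Next I would convert the period-$8$ convolution into a finite-order recurrence. The generating function $\sum_n d_n x^n=\bigl(1-\sum_{j\ge1}\tilde\sigma_j x^j\bigr)^{-1}$ with $\tilde\sigma_j=(-1)^j\sigma_{\pi/4}(j)$ simplifies to the rational function $(1-x^8)/Q(x)$ with $Q(x)=1-x-x^2+x^3-x^4+x^5-x^6-x^7$; extracting the coefficient of $x^n$ for $n\ge 9$ gives
$$d_n=d_{n-1}+d_{n-2}-d_{n-3}+d_{n-4}-d_{n-5}+d_{n-6}+d_{n-7}.$$
Computing the base values $d_0,\dots,d_8=1,1,2,2,4,4,8,10,18$ then settles the exact equalities (a)–(d) and (h)–(i), read off from $D_{\pi/4}(n)=(-1)^nd_n$ and $D_{3\pi/4}(n)=d_n$. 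For the growth claims (e)–(g) and (j) I would rewrite the recurrence as
$$d_n-d_{n-1}=(d_{n-2}-d_{n-3})+(d_{n-4}-d_{n-5})+d_{n-6}+d_{n-7},$$
and run a strong induction: assuming $d_k\ge d_{k-1}$ for $k<n$ (base cases $k\le 8$ by direct computation), the two bracketed differences are $\ge 0$ while the tail $d_{n-6}+d_{n-7}\ge 2$, so $d_n>d_{n-1}$ strictly for $n\ge 7$, giving (g) and the monotonicity half of (j). The same line gives $d_n\ge d_{n-1}+2$, hence $d_n\ge 2n+2$ for $n\ge 8$ starting from $d_8=18$, which together with $d_6=8,\ d_7=10$ yields $d_n>n$ for all $n\ge 6$, i.e. (e), (f) and the lower-bound half of (j).

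The main obstacle is that the order-$7$ recurrence has genuinely mixed signs, so no coefficientwise bound works; the whole argument rests on pairing $(d_{n-2}-d_{n-3})+(d_{n-4}-d_{n-5})$ into nonnegative blocks plus a strictly positive tail, which is exactly what delivers both strict monotonicity and the increment $\ge 2$ at once. A secondary source of care is tracking the factor $(-1)^n$ when translating between $d_n$ and $D_{\pi/4}(n)$, and verifying enough base cases ($k\le 8$) so that the inductive step, valid only from $n=9$, is correctly seeded.
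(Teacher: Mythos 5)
Your argument is correct, and its backbone coincides with the paper's: both proofs reduce everything to the signed count $D_\theta(n)=P_\theta^+(n)-P_\theta^-(n)$, observe via Proposition \ref{Car} that it satisfies the period-$8$ convolution $D_\theta(n)=\sum_{j=1}^n\sigma_\theta(j)\,D_\theta(n-j)$, with $\sigma_{3\pi/4}$ positive exactly on $E_1\cup E_2\cup E_4\cup E_6\cup E_7$ and negative on $E_3\cup E_5\cup E_8$, transfer the $\pi/4$ claims to $3\pi/4$ by the same parity symmetry that underlies Proposition \ref{Sym}, and finish (e)--(g), (j) by strong induction on this recurrence. (The paper derives the convolution from Proposition \ref{RelRec2}, i.e.\ through the coefficients $a_{n,\theta}$, whereas you derive it purely combinatorially by splitting off a part; these are equivalent.) Where you genuinely differ is the inductive engine. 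The paper stays inside the infinite periodic sum and pairs each negatively-signed index $i\in E_3\cup E_5\cup E_8$ with the positively-signed index $i-1\in E_2\cup E_4\cup E_7$, which forces it to fuss, inside the inductive step, over the flat spots $P_{3\pi/4}^\delta(2)=P_{3\pi/4}^\delta(3)$ and $P_{3\pi/4}^\delta(4)=P_{3\pi/4}^\delta(5)$. You instead collapse the convolution, through the generating function $(1-x^8)/Q(x)$ with $Q(x)=1-x-x^2+x^3-x^4+x^5-x^6-x^7$, into the constant-coefficient recurrence $d_n=d_{n-1}+d_{n-2}-d_{n-3}+d_{n-4}-d_{n-5}+d_{n-6}+d_{n-7}$ valid for $n\ge 9$, so the induction becomes: nine computed initial values $1,1,2,2,4,4,8,10,18$ (which I checked are correct), two bracketed differences that are nonnegative by hypothesis, and a tail $d_{n-6}+d_{n-7}\ge 2$. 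This buys a cleaner argument and a strictly stronger conclusion ($d_n\ge d_{n-1}+2$, hence $d_n\ge 2n+2$ for $n\ge 8$, versus the paper's $P_{3\pi/4}^\delta(n)>n$), and it confines all exceptional behaviour to finitely many base cases rather than to the inductive step. One expository correction: in your second paragraph you assert $D_{3\pi/4}(n)=d_n=|D_{\pi/4}(n)|$ and ``in particular $d_n>0$,'' but at that stage you only know $d_n=(-1)^nD_{\pi/4}(n)=D_{3\pi/4}(n)$; positivity (hence the absolute-value identity and the unlabelled sign claims of the proposition) only becomes available once your induction shows $d_n$ is nondecreasing from $d_0=1$, so that sentence should be deferred until after the monotonicity argument. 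This is an ordering issue, not a gap.
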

	
	\begin{proof}
		The equalities (a), (b), (c), (d), (h) and (i) follow from a simple calculation. (e), (f) and (g) follow from (j) by noticing that
		$$ CR_{\pi/4}(m) = \left\{\begin{array}{ll}
			CR_{3\pi/4}(m)\ \hbox{if}\ n\ \hbox{is even} \\
			- CR_{3\pi/4}(m)\ \hbox{if}\ n\ \hbox{is odd}\\
		\end{array}
		\right.  $$
		see the proof of Proposition \ref{Sym}.\\
		Let's prove inequality (j). For $n=6$, $P_{3\pi/4}^+(6) - P_{3\pi/4}^-(6) = 8 > 6$. For $n=7$, $P_{3\pi/4}^+(7) - P_{3\pi/4}^-(7) = 10 > 7$. By induction, suppose that for $n' < n$ the property (j) is true. By Proposition \ref{RelRec2}
		$$a_{n,3\pi/4} = \sum_{i=1}^{n} -2^{(i+2)/2} \cdot \frac{C_{3\pi/4}(i)}{n} a_{n-i}$$
		thus,\\
		for $1 \leq i \leq n$, $P_{3\pi/4}^+(n-i) - P_{3\pi/4}^-(n-i) > n-i$ and $P_{3\pi/4}^\delta(n-i)$ is strictly increasing by induction hypothesis,\\
		for $i \in E_1\cup E_2 \cup E_4 \cup E_6 \cup E_7$, $-2^{(i+2)/2} \cdot \frac{C_{3\pi/4}(i)}{n} > 0$. The difference between the positive and the negative terms of $-2^{(i+2)/2} \cdot \frac{C_{3\pi/4}(i)}{n} a_{n-i}$ is equal to $P_{3\pi/4}^\delta(n-i)$. \\
		and for $i \in E_3\cup E_5 \cup E_8$, $-2^{(i+2)/2} \cdot \frac{C_{3\pi/4}(i)}{n} < 0$. the positive and the negative terms of $-2^{(i+2)/2}.\frac{C_{3\pi/4}(i)}{n} a_{n-i}$ are reversed, the difference is equal to $-P_{3\pi/4}^\delta(n-i)$. \\ 
		We conclude that 
		$$P_{3\pi/4}^\delta(n)= \sum_{i \in E_1\cup E_2 \cup E_4 \cup E_6 \cup E_7} P_{3\pi/4}^\delta(n-i)- \sum_{i \in E_3\cup E_5 \cup E_8}P_{3\pi/4}^\delta(n-i). $$
		Finally, if $i \in E_3\cup E_5 \cup E_8$, we have $P_{3\pi/4}^\delta(n-i+1)-P_{3\pi/4}^\delta(n-i) \geq 1$ for $n-i \neq 2$ or $4$, and $P_{3\pi/4}^\delta(n-i+1)-P_{3\pi/4}^\delta(n-i) = 0$ for $n-i=2$ or $4$. Since $P_{3\pi/4}^\delta(n-1) > n-1$, we conclude that $P_{3\pi/4}^\delta(n) > n$ and $P_{3\pi/4}^\delta(n) > P_{3\pi/4}^\delta(n-1)$.
	\end{proof}

	\begin{theorem} \label{SignAi}
		Let $L(t) = \sum_{n=0}^{2g} a_n t^n$ be the $L$-polynomial of a function field $\FF/\F_2$ with $1 \leq g \leq 6$. Under the conditions of case (a) and for $n \in \{0, \ldots, g \}$ we have
		\begin{enumerate}[label={(\alph*)}]
			\item $a_{n,\pi/4} < 0 $ if $n$ is odd and $a_{n,\pi/4} > 0$ if $n$ is even.
			\item $a_{n,3\pi/4} > 0 $.
			\item The sequence $|a_{n,\theta}|$ with $\theta = \pi/4$ or $3\pi/4$ is increasing. 
		\end{enumerate}
	\end{theorem}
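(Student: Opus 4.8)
The plan is to collapse all three statements into a single positivity-and-monotonicity claim about the family $a_{n,3\pi/4}$ and then extract it from the recurrence of Proposition \ref{RelRec2}. First I would invoke Proposition \ref{Sym}: since $a_{n,\pi/4}=a_{n,3\pi/4}$ when $n$ is even and $a_{n,\pi/4}=-a_{n,3\pi/4}$ when $n$ is odd, statement (a) is an immediate consequence of statement (b), and the equality $|a_{n,\pi/4}|=|a_{n,3\pi/4}|$ makes statement (c) the same assertion for both values of $\theta$. Granting (b) we have $|a_{n,3\pi/4}|=a_{n,3\pi/4}$, so the whole theorem is equivalent to proving, for $n\in\{0,\ldots,g\}$, that
$$0<a_{0,3\pi/4}<a_{1,3\pi/4}<\cdots<a_{g,3\pi/4}.$$

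The second point is that the hypothesis $g\le 6$ forces every part $m_s$ of every composition of an integer $n\le g$ to satisfy $m_s\le 6<7$; hence the classes $E_7$ and $E_8$ never occur and only the values of $C_{3\pi/4}$ on $\{1,\ldots,6\}$ intervene. Writing the recurrence of Proposition \ref{RelRec2} as $n\,a_{n,3\pi/4}=\sum_{i=1}^{n}c_i\,a_{n-i,3\pi/4}$ with $c_i:=-2^{(i+2)/2}C_{3\pi/4}(i)$, the table for $C_{3\pi/4}$ gives
$$c_1=2(g-1),\quad c_2=4,\quad c_3=-4(g-1),\quad c_4=8(g-2),\quad c_5=-8(g-1),\quad c_6=16.$$
Here I would flag the degenerate case $g=1$: then $c_1=0$ and $a_{1,3\pi/4}=0$, so $L(t)=1+2t^2$ and the sign and monotonicity statements must be read for $g\ge 2$. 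Assuming $g\ge 2$ we have $c_1,c_2,c_6>0$ and $c_4\ge 0$, with $c_3,c_5$ the only negative coefficients.

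With the reduction in place the remaining task is finite: for each $g\in\{2,\ldots,6\}$ there are at most seven quantities $a_{0,3\pi/4},\ldots,a_{g,3\pi/4}$, which I would compute from the recurrence and read off as strictly positive and strictly increasing. To make the pattern transparent rather than purely numerical, I would run a simultaneous induction on $n$ proving $0<a_{n-1,3\pi/4}<a_{n,3\pi/4}$: the inductive step rests on showing that the positive contributions $c_1a_{n-1,3\pi/4}+c_2a_{n-2,3\pi/4}+c_4a_{n-4,3\pi/4}+c_6a_{n-6,3\pi/4}$ outweigh the negative ones $4(g-1)a_{n-3,3\pi/4}+8(g-1)a_{n-5,3\pi/4}$, where the monotonicity already established supplies $a_{n-3,3\pi/4}\le a_{n-1,3\pi/4}$ and $a_{n-5,3\pi/4}\le a_{n-1,3\pi/4}$ so that the positive coefficients can absorb the negative ones; for $2\le g\le 6$ this comes down to a handful of explicit inequalities among the $c_i$, and an analogous comparison handles the increment $a_{n,3\pi/4}-a_{n-1,3\pi/4}$.

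The main obstacle is exactly this sign bookkeeping: the recurrence is not sign-definite, so positivity of $a_{n,3\pi/4}$ cannot be read off from the counting estimates of Proposition \ref{Ordre} alone. Indeed the individual terms $CR_{3\pi/4}(m)$, $m\in\P(n)$, have very unequal magnitudes, so knowing that the positive compositions outnumber the negative ones is only heuristic evidence; the rigorous sign must come from summing the recurrence. The remedy is to carry positivity and strict monotonicity through the induction together, using monotonicity to tame the negative coefficients $c_3,c_5$, and to anchor and close the argument with the direct evaluation of the finitely many instances $g\in\{2,3,4,5,6\}$, which simultaneously exhibits the genuine failure of the sign claim at $g=1$.
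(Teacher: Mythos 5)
Your proposal is correct for $2\le g\le 6$, but it proves the theorem by a genuinely different mechanism than the paper. The paper fixes $\theta=\pi/4$ and runs a combinatorial matching on compositions: by Proposition \ref{Car}, every composition in $\P_{\pi/4}^-(n)$ (for $n$ even) contains a part in $E_3\cup E_5$; replacing a part $5$ by $(1,4)$ and a part $3$ by $(1,1,1)$ produces a composition in $\P_{\pi/4}^+(n)$ of strictly larger absolute weight (inequalities \eqref{In5} and \eqref{In3}), the exceptional associations at $g=5,6$ are absorbed by the factor-two margin in those inequalities, part (c) is obtained by appending a part $1$, and part (b) then follows from Proposition \ref{Sym}. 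You invert that reduction, pushing everything onto $\theta=3\pi/4$ via Proposition \ref{Sym} (legitimate and non-circular, since \ref{Sym} rests on \ref{Car}, not on the theorem), and then replace all the combinatorics by the recurrence of Proposition \ref{RelRec2}, whose coefficients you compute correctly: $c_1=2(g-1)$, $c_2=4$, $c_3=-4(g-1)$, $c_4=8(g-2)$, $c_5=-8(g-1)$, $c_6=16$. The finite verification you invoke does close the argument: the recurrence gives $(1,2,4)$ for $g=2$, $(1,4,10,16)$ for $g=3$, $(1,6,20,44,72)$ for $g=4$, $(1,8,34,96,200,320)$ for $g=5$, and $(1,10,52,180,460,912,1440)$ for $g=6$, all strictly positive and strictly increasing. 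Your flag of $g=1$ is also a genuine catch: there $a_1=0$ (indeed $L(t)=1+2t^2$), so the strict sign claim in (a) and the increase in (c) fail; the paper's statement includes $g=1$, yet its own machinery (Propositions \ref{Car} and \ref{Ordre} assume $g>2$) never covers that case.

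Two caveats, one of which matters. The ``transparent'' induction you superpose on the computation is not sound as stated: bounding $a_{n-3,3\pi/4}$ and $a_{n-5,3\pi/4}$ above by $a_{n-1,3\pi/4}$ bounds the negative part of the recurrence by $12(g-1)\,a_{n-1,3\pi/4}$, which cannot be absorbed by $c_1a_{n-1,3\pi/4}=2(g-1)\,a_{n-1,3\pi/4}$, and the remaining positive terms carry smaller indices, so monotonicity bounds them in the wrong direction; what actually makes the sums come out positive is the fast growth of the ratios such as $a_{n-1,3\pi/4}/a_{n-3,3\pi/4}$, which only the explicit evaluation certifies. So in your argument the numerical check \emph{is} the proof, not an illustration of it, and consequently your method offers no leverage for removing the hypothesis $g\le 6$ (the recurrence has genuinely negative coefficients, so positivity does not propagate formally), whereas the paper's matching argument is precisely what its closing remark proposes to extend to arbitrary genus.
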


	\begin{proof}
		We will prove the theorem for $\theta = \pi/4$, the second part is obvious by Proposition \ref{Sym}.\\
		By Proposition \ref{ForC1} we have
		
		$$a_{n,\pi/4}  =  \sum_{(m_1,\ldots,m_r)\in \P(n)} (-1)^r. 2^{r+n/2} \prod_{s=1}^{r} \frac{(g-1)cos(m_s \cdot \pi/4)+cos(m_s \cdot \pi/2)}{m_1+\cdots+m_s} $$
		
		Suppose that $n$ is even. Let $m=(m_1,\ldots,m_r) \in \P_{\pi/4}^-(n)$. Our aim here, is to associate to $m$ another composition $m'$ such that $P_{m'}\in \P_{\pi/4}^+(n)$ and \linebreak[4] $|CR_{\pi/4}(m)| \leq |CR_{\pi/4}(m')|$. \\
		
		First, let's prove that $\exists j\in \{1,\ldots,r \}$ such that $m_j \in E_3 \cup E_5$. Suppose the opposite, since $m \in \P_{\pi/4}^-(n)$ by Proposition \ref{Car}, the number $\# [E_1 \cap \{m_1,\ldots,m_r \}]$ is odd (there are no elements in $E_7 \cup E_8$ because $n \leq g < 7$), but the other numbers in $m$ are all even ($2$, $4$, $6$), then $m_1+\cdots+m_r = n$ is odd. This contradicts the supposition that $n$ is even. We conclude that $\exists j\in \{1,\ldots,r \}$ such that $m_j \in E_3 \cup E_5$.

		\begin{itemize}
			\item If $m$ contains $m_j \in E_5$, we associate to
			$$m=(m_1,\ldots,m_{j-1},m_j,m_{j+1},\ldots,m_r)$$
			$m'$ such that
			$$m'=(m_1,\ldots,m_{j-1},m_{j_1},m_{j_2},m_{j+1},\ldots,m_r)$$
			with $m_{j_1} \in E_1$ and $m_{j_2} \in E_4$.
			It is clear, by Proposition \ref{Car}, that \linebreak[4] $P_{m'}\in \P_{\pi/4}^+(n)$. Moreover, we have
			
			\begin{equation*}
				\begin{split}
					CR_{\pi/4}(m)=(-1)^r \cdot 2^r \cdot 2^{n/2}\prod_{s=1}^{j-1} \frac{(g-1)cos(m_s \cdot \pi/4)+cos(m_s \cdot \pi/2)}{m_1+\cdots+m_s} \cdot \frac{-(g-1) \cdot \sqrt{2}/2}{m_1+\cdots+m_j} \cdot \\ \prod_{s=j+1}^{r} \frac{(g-1)cos(m_s 
						\cdot \pi/4)+cos(m_s \cdot \pi/2)}{m_1+\cdots+m_s} 
				\end{split}
			\end{equation*}
			
			and
			
			\begin{equation*}
				\begin{split}
					CR_{\pi/4}(m')=(-1)^{r+1} \cdot 2^{r+1} \cdot 2^{n/2}\prod_{s=1}^{j-1} \frac{(g-1)cos(m_s \cdot \pi/4)+cos(m_s \cdot \pi/2)}{m_1+\cdots+m_s} \cdot \\ \frac{-(g-2)}{m_1+\cdots+m_{j-1}+ m_{j_1}} \cdot  \frac{(g-1) \cdot \sqrt{2}/2}{m_1+\cdots+m_j} \cdot \prod_{s=j+1}^{r} \frac{(g-1)cos(m_s \cdot \pi/4)+cos(m_s \cdot \pi/2)}{m_1+\cdots+m_s}
				\end{split}
			\end{equation*}
			since 
			\begin{equation}
				2 \cdot \frac{(g-2)}{m_1+\cdots+m_{j-1}+ m_{j_1}} \cdot \frac{(g-1) \cdot \sqrt{2}/2}{m_1+\cdots+m_j} > \frac{(g-1) \cdot \sqrt{2}/2}{m_1+\cdots+m_j} \label{In5}
			\end{equation}
			($m_1+\ldots+m_{j-1}+ m_{j_1} \leq n-4$ and $n \leq g$) we conclude that \linebreak[4] $|CR_{\pi/4}(m)|<|CR_{\pi/4}(m')|$.

			\item If $m$ contains $m_j \in E_3$, we associate to
			$$m=(m_1,\ldots,m_{j-1},m_j,m_{j+1},\ldots,m_r)$$
			$m'$ such that
			$$m'=(m_1,\ldots,m_{j-1},m_{j_1},m_{j_2},m_{j_3},m_{j+1},\ldots,m_r)$$
			with $m_{j_1}, m_{j_2}, m_{j_3} \in E_1$. It is clear, by Proposition \ref{Car}, that $P_{m'}\in \P_{\pi/4}^+(n)$. Moreover, we have

			\begin{equation*}
				\begin{split}
					CR_{\pi/4}(m)=(-1)^r \cdot 2^r \cdot 2^{n/2}\prod_{s=1}^{j-1} \frac{(g-1)cos(m_s \cdot \pi/4)+cos(m_s \cdot \pi/2)}{m_1+\cdots+m_s} \cdot \frac{-(g-1) \cdot \sqrt{2}/2}{m_1+\cdots+m_j} \cdot  \\ \prod_{s=j+1}^{r} \frac{(g-1)cos(m_s \cdot \pi/4)+cos(m_s \cdot \pi/2)}{m_1+\cdots+m_s} 
				\end{split}
			\end{equation*}
			
			and
			
			\begin{equation*}
				\begin{split}
					CR_{\pi/4}(m')=(-1)^{r+2} \cdot 2^{r+2} \cdot 2^{n/2}\prod_{s=1}^{j-1} \frac{(g-1)cos(m_s \cdot \pi/4)+cos(m_s \cdot \pi/2)}{m_1+\cdots+m_s} \cdot \\ \frac{(g-1) \cdot \sqrt{2}/2}{m_1+\cdots+m_{j-1}+ m_{j_1}} \cdot \frac{(g-1) \cdot \sqrt{2}/2}{m_1+\cdots+m_{j_2}} \cdot \frac{(g-1) \cdot \sqrt{2}/2}{m_1+\cdots+m_j} \cdot \\ \prod_{s=j+1}^{r} \frac{(g-1)cos(m_s \cdot \pi/4)+cos(m_s \cdot \pi/2)}{m_1+\cdots+m_s}
				\end{split}
			\end{equation*}
			
			since 
			\begin{equation}
				4 \cdot \frac{(g-1) \cdot \sqrt{2}/2}{m_1+\cdots+m_{j-1}+ m_{j_1}} \cdot \frac{(g-1) \cdot \sqrt{2}/2}{m_1+\cdots+m_{j_2}} \cdot \frac{(g-1) \cdot \sqrt{2}/2}{m_1+\cdots+m_j} > \frac{(g-1) \cdot \sqrt{2}/2}{m_1+\cdots+m_j} \label{In3}
			\end{equation}
			we conclude that $|CR_{\pi/4}(m)|<|CR_{\pi/4}(m')|$.
			
		\end{itemize}
		
		Notice that in both inequalities \eqref{In5} and \eqref{In3}, we can have the stronger conclusion: $2 \cdot |CR_{\pi/4}(m)|<|CR_{\pi/4}(m')|$ (because of the coefficients $2$ and $4$). That means that we can associate at least two negative compositions with one positive. \\ 
		That is why we have limited the theorem to $1 \leq g \leq 6$. There are not enough compositions to be required to associate more than two compositions with only one, except for $g=6$ where we associate 
		$$(1,1,1,3)\ \hbox{with}\ (1,1,1,1,1,1)$$
		$$(1,1,3,1)\ \hbox{with}\ (1,1,1,1,1,1)$$
		$$(1,3,1,1)\ \hbox{with}\ (1,1,4)$$
		$$(1,5)\ \hbox{with}\ (1,1,4)$$
		$$(3,1,1,1)\ \hbox{with}\ (1,4,1)$$
		$$(5,1)\ \hbox{with}\ (1,4,1)$$
		we prove this easily as we shown above. We conclude that $a_{n,\pi/4} > 0$ if $n$ is even.\\
		Similarly, we prove that $a_{n,\pi/4} < 0 $ if $n$ is odd, and the exception here is for $g=5$:
		$$(1,1,3)\ \hbox{with}\ (1,1,1,1,1)$$
		$$(1,3,1)\ \hbox{with}\ (1,1,1,1,1)$$
		$$(3,1,1)\ \hbox{with}\ (1,4)$$
		$$(5)\ \hbox{with}\ (1,4)$$
		
		Let's prove (c). Suppose $n$ is even, let $m=(m_1,\ldots,m_r) \in \P(n-1)$,
		\begin{itemize}
			\item If $m=(m_1,\ldots,m_r) \in \P_{\pi/4}^+(n-1)$, let $m'=(m'_1,\ldots,m'_{r'}) \in \P_{\pi/4}^-(n-1)$ such that they are associated as we saw previously. There is $M$ and $M'$ such that $M=(m_1,\ldots,m_r,1) \in \P_{\pi/4}^-(n)$ and $M'=(m'_1,\ldots,m'_{r'},1) \in \P_{\pi/4}^+(n)$ (if we add "$1$" to a composition, its sign changes by Proposition \ref{Car}). It is clear that $M$ and $M'$ are associated then
			$$ |CR_{\pi/4}(m)|<|CR_{\pi/4}(m')|\ \hbox{, }\ |CR_{\pi/4}(M)|<|CR_{\pi/4}(M')| $$
			and 
			$$ |CR_{\pi/4}(m)+CR_{\pi/4}(m')| < |CR_{\pi/4}(M)+CR_{\pi/4}(M')|$$
			Since $ 2(g-1)/n . |CR_{\pi/4}(m)+CR_{\pi/4}(m')| = |CR_{\pi/4}(M)+CR_{\pi/4}(M')|$ (see the first part of the proof).
			\item let $m'=(m'_1,\ldots,m'_{r'}) \in \P_{\pi/4}^-(n-1)$ that is not associated with any $\P_{\pi/4}^+(n-1)$ elements (it is possible by Proposition \ref{Ordre}), then $$M'=(m'_1,\ldots,m'_{r'},1)\  \hbox{or}\ M'=(1,m'_1,\ldots,m'_{r'})$$ are in $\P_{\pi/4}^+(n)$ and one of them is associated with no element of $\P_{\pi/4}^-(n)$ (because $g \leq 6$, we can avoid the paths $1,1,1$ and $1,4$ ). Moreover, $ |CR_{\pi/4}(m')|<|CR_{\pi/4}(M')|$.
		\end{itemize}
		We conclude that 
		$$ |a_{n-1,\pi/4}| = |\sum_{m \in \P(n-1)} CR_{\pi/4}(m)| < |a_{n,\pi/4}| = |\sum_{m \in \P(n)} CR_{\pi/4}(m)|.$$
		
	\end{proof}
	
	\begin{remark}
		\begin{itemize}
			\item In fact, Theorem \ref{SignAi} is true for $ 1 \leq g$ and the proof is still valid while we know how to deal with the exceptions, as we have seen for $g=5$ and $6$. The problem is that the number of the exceptions is quickly very large. We keep seeking for a stronger proof with no restriction for $g$, the idea is to organize $\P(n)$ following an ascending order of $CR_{\theta}(m)$ for analysing their distribution (Proposition \ref{Ordre} is a good beginning). 
			\item We know that if $N_1 \geq g+1$, there exists a non-special divisor of degree $g-1$. Moreover we know all the curves which contain this kind of divisors over $\F_2$ for $g=1$ and $2$ (see \cite{balb}). Since for defect 2 curves over $\F_2$ with $g \geq 3$ one has 
			$$|N_1-3|=2g-2\ \Rightarrow N_1 = 2g+1 \Rightarrow N_1 \geq g+1$$
			the existence of these divisors is obvious. Thus, we do not need to use the coefficients $a_n$ for this purpose. Nevertheless, for the defect 3 curves over $\F_2$ with $g \geq 3$, one has
			$$|N_1-3|=2g-3\ \Rightarrow N_1 = 2g\ \hbox{or}\ N_1=-2g+6 $$
			our next goal, is to use the method of section \ref{exam} to prove the existence of non-special divisor of degree $g-1$ for these curves (the case $N_1=-2g+6$ with $g=3$).  
		\end{itemize}
	\end{remark}
	
	\section*{Acknowlegements}
	I would like to express my deepest appreciation and gratitude to my PhD supervisors Stéphane BALLET and Julia PIELTANT  for their priceless advice, guidance and continues support. Indeed, their involvement highly contributed to improve the paper. I would like to extend my sincere thanks to my supportive wife, who had always believed in my professional projects and my parents who helped me through different stages of my life.
	
	\bibliography{BallKoutPiel_divNS_biblio.bib}

\end{document}